\theoremstyle{plain}
\newtheorem{lem}{Lemma}[section]
\newtheorem{thm}[lem]{Theorem}
\theoremstyle{definition}
\theoremstyle{remark}
\newtheorem{rem}{Remark}[section]
\begin{document}
\title{ \large\bf Multiscale model reduction method for Bayesian inverse problems of subsurface flow}

\author{
Lijian Jiang\thanks{Institute  of Mathematics, Hunan University, Changsha 410082, China. Email: ljjiang@hnu.edu.cn. Corresponding author}
\and
Na Ou\thanks{College of Mathematics and Econometrics, Hunan University, Changsha 410082, China.}
}

\date{}
\maketitle
\begin{center}{\bf ABSTRACT}
\end{center}
This work presents a model reduction approach to the inverse problem in the application of subsurface flows.  One such an application is
to estimate model's inputs and identify model's parameters.  This is often challenging because the complicated multiscale structures are inherently in the model and the estimated
inputs are parameterized in a high-dimensional space.  We often need to estimate  the probabilistic    distribution of the unknown inputs based on some observations.
Bayesian inference is desirable  for this situation and solving the inverse problem. For the Bayesian inverse problem,  the forward model needs to be repeatedly computed for
a large number of samples to get a stationary chain. This requires  large computational efforts.  To significantly improve the computation efficiency, we use generalized
multiscale finite element method and least-squares stochastic collocation method to construct a reduced computational model.
To avoid  the difficulty  of choosing regularization parameter,  hyperparameters are introduced to build a hierarchical model.   We use truncated  Karhunen-Loeve expansion (KLE)
to reduce the dimension of the parameter spaces and decrease the mixed time of Markov chains.  The techniques of hyperparameter and KLE are incorporated into the model
reduction method.  The reduced model is constructed offline.  Then it is computed very efficiently in the online sampling stage.
This strategy can significantly accelerate  the evaluation of the Markov chain and the resultant  posterior distribution converges fast.
We analyze the convergence for the approximation between  the posterior distribution  by the reduced model and the reference posterior distribution by the full-order model.
A few numerical examples in subsurface flows are carried out to demonstrate
the performance of the presented  model reduction method with application of the Bayesian inverse problem.

\begin{keywords}
 Bayesian inverse problem, GMsFEM, LS-SCM, Subsurface flows
\end{keywords}

\section{Introduction}

Subsurface flow model is a fundamental model in water resources and  applied sciences \cite{jegl07}.  Uncertainties exist inherently in subsurface flow models in heterogeneous porous media.
There are uncertainties coming from the model's inputs and parameters. Because of lack of enough knowledge for geophysical process and measurement noise, we may not know
the model inputs or parameters clearly. The uncertainties can propagate through the model and greatly affect on the prediction of the model. To better predict the model's outputs,
we need to  estimate   the model's inputs and parameters based on some limited observations or measurements. The estimation of the model's inputs such as initial condition, boundary
condition and source location, leads to solving inverse problems.

Inverse problems usually need some  indirect observations. Sparse observations and the uncertainties from forward models' prior information may result in ill-posed inverse problem. The ill-posedness means that no solution exists, multiple solutions may exist, or solutions may not depend  on the data continuously. In practical applications, the inevitable measurement error would increase the challenge of obtaining stable and accurate numerical solutions of the inverse problems.

A classical approach to regularize  inverse problems is through the least squares approach and Tikhonov regularization \cite{hw00, at05}, which leads to the optimization problem: minimize the misfit between observed and predicted outputs in an appropriate norm while penalize unwanted features of the parameters by a regularization term. Point estimates of parameters obtained by this approach would be the best-fit parameters in the sense that the values of parameters  fit the data and honor the regularization penalty term simultaneously.
However, what we are interested in may not only point estimates but also the statistical properties of the parameters.  These can be achieved by
Bayesian inference. In this paper, we resort to  Bayesian inference about the unknown parameters for modeling subsurface flows.

The Bayesian approach \cite{jk06, at05} incorporates uncertainties in observations and prior information by Bayesian rule and gives the posterior probability density of the parameters, which enables us to quantify the  uncertainty in the parameters.  We can use the posterior conditional expectation or maximum a posterior (MAP) to characterize the parameters. It has been shown in \cite{as10} that with some specific prior density, searching the MAP of the posterior measure is equivalent to seeking for the solution of the Tikhonov regularization  problem. Except for the advantage of obtaining the complete statistical description of the interested parameters, using hierarchical model \cite{pc07} in the framework of Bayesian inference enables us to avoid the selection of the regularization parameter, which is very challengeable in  Tikhonov regularization method.

 Although  the posterior density can  be expressed  by something proportional to the production of the likelihood and the prior density,
    it is hard to utilize the expression straightforwardly because of  the nonlinearity of the parameter-to-observation  map and lack of analytical form of the forward model. Instead of analyzing the expression of the posterior, we implement sample-based inference by using Markov chain Monte Carlo (MCMC)  method \cite{bw04, cr13, lj08}. The MCMC approach is often  computationally prohibitive as it requires a large number of forward model simulations during the sampling, especially when  the model is computationally intensive, such as large-scale PDE-based models. In order to accelerate Bayesian inference in the computationally intensive inverse problems, the main attempts include reducing order or searching for surrogates of the forward models \cite{mf10,gr08, mh10}, or seeking more efficient sampling from the posterior \cite{jc12, ye05, tc14, jm12, jw11}. Reduced order models (ROM) refers to projecting a real world system onto a suitable  subspace
    with lower dimension, such that the resulting system is much less computationally demanding  than the original full-order system \cite{cw11, rs13}.

  Subsurface flow models in heterogeneous porous media usually have a wide range of length scales varying from pore scales to field scales. Numerical multiscale methods can efficiently and accurately solve such multiscale models in a coarse grid. Multiscale Finite Element Method (MsFEM) \cite{hwy97} is one of the  multiscale methods and many other multiscale share its similarity \cite{ye09}.
  The basic idea of MsFEM is to incorporate the small-scale information to multiscale basis functions and capture the impact of small-scale features on the coarse-scale
through a variational formulation. One of the most important features for MsFEM is that the multiscale basis functions can  be computed overhead and used repeatedly for the model with different source terms, boundary conditions and the coefficients with similar multiscale structures \cite{hwy97, ye09, jegl07}.
    Recently, a generalized Multiscale Finite Element Method (GMsFEM) \cite{ye11, ye13} has been  developed to solve multiscale models with complex multiscale structures.  GMsFEM has some advantages over the standard MsFEM. For example, the coarse space in GMsFEM is more flexible and the convergence of GMsFEM is independent of the high-contrastness of the multiscales \cite{ye11,ye13}.

In Bayesian inverse problems, the prior uncertainty can be parameterized by random variables and is incorporated into the model.  The model's output depends on the random parameters.
We can use generalized polynomial chaos (gPC)-based stochastic Galerkin methods \cite{dx10} to propagate prior uncertainty through the forward model \cite{ym07, ymm09}. As an alternative to the stochastic Galerkin approach, stochastic collocation \cite{ym09, dx10} requires only a few number of uncoupled deterministic simulations, with no reformulation of the governing equations of the forward model. A sparse grid collocation method using the Smolyak algorithm is presented in \cite{bg07, dx07}, where a stochastic surrogate model is constructed. However, the growing rate of number of collocation nodes required to achieve a good polynomial approximation leads a great challenge in this scheme. To overcome the difficulty, we can assume that the model's output is a stochastic field and
admits a gPC expansion. Then we choose  a set of collocation nodes and use least-squares methods to determine the coefficients of the gPC basis functions. We call the method as
least-squares stochastic collocation method (LS-SCM).  This method shares the same idea as probabilistic collocation method \cite{llz09}. LS-SCM has  the merits from stochastic Galerkin methods and collocation methods.
The recent work \cite{ly12} employs a stochastic collocation algorithm using $l_1$-minimization to construct stochastic sparse models with limited number of nodes, and their strategy has been  applied to the Bayesian approach to handle nonlinear problems \cite{ly15}. Such sparse stochastic collocation methods may give a feasible approach to solve problems in high dimension random spaces.

This work attempts to intensively study  the inverse problem of subsurface flows in porous media.  We will focus on  the case of saturated and confined subsurface flow, which
    is  characterized by a parabolic equation.  Bayesian approach is used to infer  source location,  boundary and initial conditions for the model.  When the target functions
    are infinite dimensional (e.g.,  boundary and initial conditions), we discretize them on a set of  grid points, and then the solutions are sought in a high-dimensional prior space. In order to
    alleviate the difficulty from high dimensionality  of the unknown parameter space, we use a truncated Karhunen-Loeve expansion (KLE) technique to effectively reduce the dimension, which can decrease the mixed time of the Markov chains.
    To accurately capture the multiscale effects of the subsurface model, we apply GMsFEM to construct  a computational surrogate model, which is used to construct   the sensitivity matrix
    for the estimated inputs. To avoid the difficulty of the regularization of the prior term,
    a hierarchical model is used to infer unknown parameters from the prior density.  The posterior distribution of the hyperparameters in the hierarchical model is affected by the number of selected multiscale basis functions used in GMsFEM.
    To avoid the intensive computation for forward model during MCMC sampling, we construct the reduced order model by combing GMsFEM with LS-SCM, which can give a representation for the model response.
       We can use the representation for the repeated  forward model evaluations  at online stage. This can significantly accelerate sampling posterior.
     To assess the approximation by the reduced order model, we analyze the convergence in terms of  Kullback-Leibler divergence. Our numerical analysis shows that the convergence strongly depends the order of gPC and the number of  multiscale basis functions in GMsFEM.  In the paper, we investigate the inversion for multiple inputs (e.g., source location and boundary flux) simultaneously. To efficiently treat multiple inversion, we decompose the model solution into different parts, each of which corresponds to a single input contribution.
       After the  reduced model is constructed,   we can very efficiently  simulate the reduced order model in likelihood procedure. Then   the unknown  inputs  of the model can be estimated by  sampling  the  posterior distribution
       based on the reduced order model.

The outline of the paper is organized as follows. We begin by formulate a subsurface flow model and its inverse problem  in section 2. Section 3  is devoted to the model reduction using  GMsFEM and  LS-SCM.  Some  sampling methods are also presented in the section. In section 4,  we analyze   the approximation between the posterior distribution of the reduced order model and the  posterior distribution of the full-order model.  In Section 5,  we present a few numerical examples to illustrate the performance of proposed method with applications in inverse subsurface flow problems.
Some conclusions and comments are made finally.

\section{Bayesian inference for inverse problems}

We consider a saturated confined  flow model in highly heterogeneous porous media,  which is  described by the following parabolic equation,
  \begin{equation}
  \label{flow-eq}
  \frac{\partial u(x,t)}{\partial t}=\text{div}\bigg(k(x)\nabla u(x,t)\bigg)+f(x),\ \ x\in\Omega, t\in(0,T],
  \end{equation}
subject to an  appropriate boundary condition and initial condition.  Here the coefficient $k(x)$ is a conductivity/permeability field, which may be high contrast and have multiscale structure. The  term $f(x)$ is a source (or sink) term. The solution $u(x,t)$ refers to the water head/pressure. To simplify the function notations, we will suppress the variables $x$ and $t$ in functions when no ambiguity occurs. For practical models,  the model inputs such as boundary/initial  condition and source locations may be not known, and they need to be estimated by some observations or measurements.

In the paper, we use Bayesian inference to estimate the unknown initial/boundary conditions and  the source location for the subsurface flow model by some given noisy measurements of the model response at various sensors. We consider the case of additive noise $e$ with probability density function $\pi(e)$, the measurement data can then be expressed by
  \[
    d = G(z)+e,
  \]
where $z$ is a vector of model parameters or inputs and $G(z)\in\mathbb{R}^{n_d}$ is the model response at measurement sensors, where $n_d$ is the dimension of observations. We assume that $e$ is independent of $Z$, then the conditional probability density for the measurement data $d$ given the unknown $z$, i.e., the likelihood function is given by
  \begin{equation}
  \label{likeli_noise}
    \pi(d|z)= \pi \big(d-G(z)\big).
  \end{equation}

We use Bayesian inference to solve the inverse problem. This approach gives not only  a point estimation but also a probability distribution. This is an advantage of Bayesian method over the standard regularization method. In the Bayesian setting, both $z$ and $d$ are random variables. Then the posterior probability density for $z$ can be derived by the Bayesian rule,
  \begin{equation}
  \label{Bayes}
  \pi(z|d)\propto \pi(d|z)\pi(z),
  \end{equation}
where $\pi(z)$ is the prior distribution with available prior information before the data is observed. The  data enters the Bayesian formulation through the likelihood function $\pi(d|z)$. For the convenience of notation, we will use $\pi^d(z)$ to denote the posterior density $\pi(z|d)$ and $L(z)$ to denote the likelihood function $\pi(d|z)$. Then $(\ref{Bayes})$ can be written as
  \begin{equation}
  \label{like_L}
  \pi^d(z)\propto L(z)\pi(z).
  \end{equation}
Furthermore, if the prior density is conditional to unknown parameter $\mu$, i.e., $\pi(z|\mu)$, the parameter $\mu$ is also a part of the inference problem in the Bayesian framework. In other words, these hyperparameters may be endowed with priors and estimated from data
  \[
  \pi(z, \mu|d)\propto L(z)\pi(z|\mu)\pi(\mu).
  \]
In the paper, we will consider a  hierarchical statistical model  for the  inverse problem with application in subsurface flow.

The vector $e$ is assumed to be independent and identically distributed (i.i.d.) Gaussian random vector with mean zero and standard deviation $\sigma$,
  \[
  e\sim N(0,\sigma^2\mathbb{I}),
  \]
where $\mathbb{I}$ is the identity matrix of size $n_d\times n_d$. Then the likelihood $L(z)$ defined as $(\ref{likeli_noise})$ is given by
  \begin{equation}
  \label{LH}
  L(z) = (2\pi\sigma^2)^{-\frac{n_d}{2}}\exp\bigg(-\frac{\|d-G(z)\|_2^2}{2\sigma^2}\bigg),
  \end{equation}
where $\| \cdot \|_2$ refers to the Euclidean norm. We note that it is not necessary to compute the normalized term in (\ref{like_L}) under most circumstances.

As the posterior distribution of $z$ can be inferred, we can extract the posterior mean or the maximum a posteriori (MAP) of the unknowns.  The MAP estimate is equivalent to the solution of a regularization minimization problem for some specific priors. However, the analytical expression of the posterior distribution is generally unavailable and the high dimension integration involved in posterior expectation is a great challenge. Markov chain Monte Carlo (MCMC) methods are a class of algorithms for sampling from a probability distribution based on constructing a Markov chain that has the desired distribution as its equilibrium distribution, and we can use the method to explore the posterior state space of the unknowns. When a set of independent samples $\{z^{(j)}\}_{j=1}^N$ successively drawn from the posterior, the conditional posterior expectation can be approximated by
  \[
    \mathbb{E}[z|d]=\frac{1}{N}\sum_{j=1}^{N} z^{(j)}.
  \]
The marginal posterior mode (MPM) of the unknowns can also be computed by the $N$ samples. To build a Markov chain with the posterior as its equilibrium distribution by the MCMC method, we need to call a large numbers of  deterministic forward solvers, which may be computationally expensive and inefficient.  For practical  subsurface flow model, it may be infeasible
to resolve all scales in very fine grid.
To overcome the difficulties from MCMC sampling and the multiscale features,  we apply  GMsFEM  and LS-SCM to construct a surrogate model for computation. The surrogate model
is defined in a coarse grid and its uncertainty is parameterized in a low dimensional space.
 This can significantly reduce the computation cost in the process of  the likelihood computation defined in (\ref{LH}).


\section{Model reduction based on GMsFEM and LS-SCM}

In this section, we use GMsFEM and LS-SCM to build a reduced order model for the inverse problem of subsurface  flow.

\subsection{GMsFEM}

 GMsFEM can achieve efficient forward model simulation and provide an accurate  approximation for  the solution of  multiscale problems.
  In this section, we follow the idea of  GMsFEM \cite{ye11, ye13} and apply it  to the subsurface flow equation (\ref{flow-eq}).
  For GMsFEM, we need to pre-compute a set of multiscale basis functions.  To this end,
  We solve the following local eigenvalue problem on each coarse block $\omega_i$,
  \begin{eqnarray}
  \label{MS-basis}
   \begin{cases}
 & -\text{div}(k\nabla \psi_{il}) =\lambda k \psi_{il},\ \ \text{in}\ \omega_i\\
 & k\nabla \psi_{il}\cdot \vec{n} = 0,\ \ \text{on}\ \partial \omega_i.
 \end{cases}
  \end{eqnarray}
This can be discretized as
  \[
   A\psi_{il}=\lambda S\psi_{il},
  \]
where
  \[
  A=[a_{mn}]=\int_{\omega_i} k\nabla l_n\nabla l_m, \quad S=[s_{mn}]=\int_{\omega_i} k l_nl_m,
  \]
where $l_n$ denotes the basis functions in fine grid. We take the first $M_i$ eigenfunctions corresponding to the dominant eigenvalues for each coarse neighborhood $\omega_i$ (see Figure \ref{coarse-cell}),
 $i=1,2,\cdots,N_H$,  where $N_H$ is the number of coarse nodes.  For each coarse element $K\in\omega_i$, let $\chi_i$ be the solution to the equation
  \[
    \left\{
  \begin{aligned}
  -\text{div}(k\nabla \chi_i)&=0,\ \ K\in\omega_i\\
  \chi_i&=g_i,\ \ \text{on}\ \partial K,
  \end{aligned}
     \right.
  \]
where $g_i$ is a  linear hat function.  The relationship between a coarse neighborhood and its coarse elements is illustrated in  Figure \ref{coarse-cell}.
 Thus $\{\chi_i\}_{i=1}^{N_H}$ form a set of partition of unity functions associated with the open cover $\{\omega_i\}_{i=1}^{N_H}$ of $\Omega$.
\begin{figure}
\label{coarse-cell}
  \centering
  \includegraphics[width=0.6\textwidth]{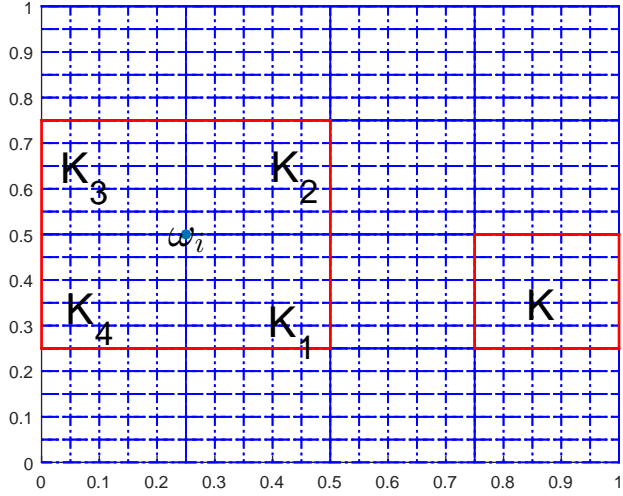}
  \caption{Illustration of a coarse neighborhood and a coarse element}
\end{figure}
Then we  multiply the partition of unity functions by the eigenfunctions to construct GMsFE space,
  \[
  V_H=\text{span}\{\Psi_{il}: \Psi_{il}=\chi_i\psi_{il}: 1\leq i\leq N_H \quad \text{and}\quad 1\leq l\leq M_i\}.
  \]
We use  a single index  for  the multiscale basis function set  $\{\Psi_{il}\}$ and place them    in  the following matrix
  \[
  R=[\Psi_1, \Psi_2,\cdots,\Psi_{M_v}],
  \]
where $M_v=\sum_{i=1}^{N_H} M_i$ denotes the total number of multiscale basis  functions.
 We note that once the matrix $R$ constructed, it can be repeatedly used for simulation.

In the paper, the backward  Euler scheme is used for temporal discretization.  Let $U^n$ be  the solution  at the $n-$th time level $t_n=n\Delta t$, where $\Delta t$ is the time step. Then we have the weak formulation for the
parabolic equation (\ref{flow-eq}),
  \[
    \left\{
  \begin{aligned}
  \bigg(\frac{U^n-U^{n-1}}{\Delta t}, v\bigg)+a(U^n, v)&=(f(t_n), v),\quad \forall v\in V_H\\
  (U^0,v)&=(u(x,0), v),\quad \forall v\in V_H,
  \end{aligned}
     \right.
  \]
where $( , )$ denotes the usual $L_2$ inner product and
  \[
    a(u,v)=\int k\nabla u\nabla vdx.
  \]
We assume the $U^n$ has the approximation
  \[
  U^n=\sum_{j=1}^{M_v} \alpha_{Hj}^n \Psi_j(x),
  \]
where the subscript $H$ denotes the GMsFEM solution on coarse grid. Let
\[
\alpha_H^n=(\alpha_{H1}^n,\alpha_{H2}^n,\cdots,\alpha_{HM_v}^n)^T.
\]
Then   for $k=1,\cdots,M_v$,
  \begin{equation}
  \label{c-eq}
  \sum_{j=1}^{M_v} \alpha_{Hj}^n(\Psi_j, \Psi_k)+\Delta t\sum_{j=1}^{M_v}\alpha_{Hj}^n a(\Psi_j, \Psi_k)=\sum_{j=1}^{M_v} \alpha_{Hj}^{n-1} (\Psi_j, \Psi_k)+\Delta t(f^n, \Psi_k).
  \end{equation}
 Let  $B$, $K$ and $F$ be the mass, stiffness matrices and load vector using FEM basis function in fine grid,  respectively. Then equation (\ref{c-eq}) gives
   the following algebraic system,
  \[
   R^TBR\alpha_H^n+\Delta tR^TKR\alpha_H^n=R^TBR\alpha_H^{n-1}+\Delta tR^TF,
  \]
 If we denote
 \[\tilde B=R^TBR,  \quad \quad \tilde K=R^TKR,\]
  then $\alpha_H^n$ can be calculated by the iteration
  \begin{equation}
  \label{iteration_c}
  \alpha_H^n=(\tilde B+\Delta t\tilde K)^{-1}(\tilde B\alpha_H^{n-1}+\Delta tR^TF).
  \end{equation}
By using the multiscale basis functions,  the solution in fine grid can be obtained by the transformation
  \[
  \alpha_h^n=R\alpha_H^n.
  \]

We note that when GMsFEM is not applied, the full model solution is obtained by the iteration
  \begin{equation}
  \label{iteration_f}
  \alpha_h^n=(B+\Delta tK)^{-1}(B\alpha_h^{n-1}+\Delta tF).
  \end{equation}
Compared  $(\ref{iteration_c})$ with $(\ref{iteration_f})$, it can be seen that  the size of $\tilde K$ and $\tilde B$ are $M_v \times M_v$, but the size of $K$ and $B$ are $N_h \times N_h$  ($M_v\ll N_h$).
 Thus a much smaller system is solved in GMsFEM.  The matrix $R$ for multiscale basis functions is computed overhead and it can be repeatedly used for all time levels.
 This significantly improves the efficiency for forward model simulations.

\subsection{Stochastic collocation via least-squares method}
\label{SCM}
 Stochastic collocation method is an efficient approach  to approximate the solution of PDEs with random inputs. In the paper, the stochastic collocation method
 is based on generalized polynomial chaos (gPC) and least-squares method.   The approximation solution  can be represented by gPC expansion using the stochastic collocation method.
 We use the stochastic collocation method to solve the forward model.
 With the established gPC expansion of the approximation of the forward model,  the evaluation of the likelihood function $L(z)$ in  MCMC sampling can be significantly accelerated.

We denote the random  parameters as $Z=(Z_1, \cdots, Z_{n_z})$, and assume that each random variable $Z_i$ has a prior probability density function $\pi_i(z_i): \Gamma_i \rightarrow \mathbb{R}$, for $i=1,\cdots, n_z$, where $\Gamma_i$ is the support of $Z_i$. Then the joint prior density function of $Z$ is
  \[
  \pi(z)=\prod_{i=1}^{n_z} \pi_i(z_i),
  \]
and its support has the form
  \[
  \Gamma :=\prod_{i=1}^{n_z} \Gamma_i \in R^{n_z}.
  \]
 If the prior of part of our parameters is bounded, e.g., $\Gamma:=\prod_{i=1}^{n_z} [-1, 1]^{n_z}$,
 we can use  Legendre orthogonal polynomials as the basis functions to construct approximations of the forward model solution.

Without loss of generality, we describe the gPC approximation to the forward model for $n_d=1$. Let $i=(i_1,\cdots,i_{n_z})$ $\in$ $N_0^{n_z}$ be a multi-index with $|i|=i_1+\cdots+i_{n_z}$, and let $N\geq0$ be an integer. The $N$th-degree gPC expansion of $G(Z)$ is defined as
  \begin{equation}
  \label{gpc_t}
    G_N(Z)=\sum_{i=1}^P c_i \Phi_i(Z), \quad \quad P=\frac{(N+n_z)!}{N!n_z!},
  \end{equation}
where
  \begin{equation}
  \label{coef_t}
  c_i=\mathbb{E}[G(Z)\Phi_i(Z)]=\int G(z)\Phi_i(z)\pi(z)dz,
  \end{equation}
are the expansion coefficients, $\mathbb{E}$ is the expectation operator, and $\Phi_i(Z)$ are the basis functions defined as
  \[
  \Phi_i(Z)=\phi_{i_1}(Z_1)\cdots\phi_{i_{n_z}}(Z_{n_z}), \quad 0\leq|i|\leq N,
  \]
where $\phi_m(Z_k)$ is the $m$th-degree one-dimensional orthogonal polynomial having been normalised in the $Z_k$ direction, which satisfies
  \[
  \mathbb{E}_k[\phi_m(Z_k)\phi_n(Z_k)]=\int \phi_m(z_k)\phi_n(z_k)\pi_k(z_k)dz_k=\delta_{m,n}, \quad 0\leq m,n\leq N.
  \]
Thus, $\{\Phi_i(Z)\}$ are $n_z$-variate orthonormal polynomials of degree up to $N$ satisfying
  \begin{equation}\label{ortho-basis}
  \mathbb{E}[\Phi_i(Z)\Phi_j(Z)]=\int \Phi_i(z)\Phi_j(z)\pi(z)dz=\delta_{i,j}, \quad 0\leq |i|,|j|\leq N,
  \end{equation}
where $\delta_{i,j}=\prod_{k=1}^{n_z} \delta_{{i_k},{j_k}}$. Following \cite{ym09}, the gPC expansion $(\ref{gpc_t})$ converges to $G$ as
  \begin{equation}
  \label{gpc_error}
  \|G(z)-G_N(z)\|_{L_{\pi_z}^2}=\bigg(\int \big(G(z)-G_N(z)\big)^2\pi(z)dz\bigg)^{1/2}\leq CN^{-p},
  \end{equation}
where $C$ is a constant independent of $N$, and $p>0$ depends on the smoothness of $G$.

In the stochastic collocation method, we first choose a set of collocation nodes $\{z^{(i)}\}_{i=1}^Q\in\Gamma$, where $Q\geq1$ is the number of nodes. Then  for each $i=1,\cdots,Q$, we solve a deterministic problem at the node $z^{(i)}$ to obtain
  \[
  G(z^{(i)})=g\circ u(x,t;z^{(i)}),
  \]
where $g: \mathbb{R}^{n_u}\rightarrow \mathbb{R}$ is a state  function. After the pairings $\big(z^{(i)}, G(z^{(i)})\big)$ ($ i=1,\cdots,Q$) being obtained, we are able to construct a good approximation of $G_N(z)$, such that $G_N(z^{(i)})=G(z^{(i)})$ for all $i=1,\cdots,Q$. Thus, we need to solve $Q$ deterministic problems.
In the paper we use least-squares method to obtain the coefficient $c$ in $(\ref{coef_t})$.

Let $\{z^{(i)}\}_{i=1}^Q$ be the set of i.i.d. samples for $Z$ and $\{G(z^{(i)})\}_{i=1}^Q$ the corresponding realizations of the stochastic function $G(Z)$.
Let
\[
c=(c_1,\cdots,c_P)^T\in \mathbb{R}^P,  \quad  b=\bigg(G(z^{(1)}),\cdots,G(z^{(Q)})\bigg)^T\in \mathbb{R}^Q.
\]
If we  set the condition $G_N(z^{(i)})=G(z^{(i)}), i=1,\cdots,Q$, then  the following equation holds,
  \begin{equation}
  \label{ls}
  Vc=b,
  \end{equation}
where $V\in \mathbb{R}^{Q\times P}$ is the matrix with the entries
  \[
  V_{ij}=\Phi_j(z^{(i)}), \quad i=1,\cdots,Q,\quad j=1,\cdots,P.
  \]
 The number $Q$ of samples is taken  according to the following rule \cite{mc15},
  \begin{equation}
  \label{ls_rule}
  Q=an_z(N+1)^2,\quad a\geq 1,
  \end{equation}
to get an accurate least-squares solution of $(\ref{ls})$.  We set $a=3$ in this paper. Hence, we get the normal equation
  \[
  V^TVc=V^Tb,
  \]
and the approximation coefficient
  \begin{equation}
  \label{coef_ls}
  \tilde c=(V^TV)^{-1}V^Tb.
  \end{equation}
Thus, we construct the $N$th-order gPC approximation by
  \begin{equation}
  \label{gpc_ls}
    \tilde G_N(Z)=\sum_{m=1}^P \tilde c_i \Phi_i(Z).
  \end{equation}

When constructing the gPC approximation, the forward model will be solved $Q$ times to obtain the sampling vector $b$. It can be seen from $(\ref{ls_rule})$ that the number of samples $Q$ is quadratic $N+1$, as we increase the order of the gPC expansion to pursue accuracy of the approximation, the times we need to solve the deterministic forward model will increase at much a magnitude. We  efficiently solve the problem using GMsFEM. Then we replace the sample vector $b$ by $b_i^M=G^M(z^{(i)})$, where $G^M(z)$ represents the GMsFEM solution at the sensor. By combing GMsFEM with LS-SCM,  we can have the $N$th-order gPC approximation
  \begin{equation}
  \label{gpc_gms}
    \tilde G_N^M(Z)=\sum_{m=1}^P \tilde c_i^M \Phi_i(Z).
  \end{equation}
 The number of multiscale basis functions on each coarse neighborhood would have effect on the accuracy of the approximation. When combining the stochastic collocation method with GMsFEM, the accuracy of the resultant surrogate model will be effected both by the order of the gPC expansion and the number of multiscale basis functions.

\subsection{Metropolis-Hastings algorithm}

 As we noted before, Markov chains are constructed for the exploring of posterior state space, and the Metropolis-Hastings (MH) algorithm is one of the extensively used algorithm to build Markov chains that converge to the posterior distribution of estimated parameters. Repeated call of the forward model is required for the MH algorithm.

The surrogate model constructed by combing GMsFEM with LS-SCM can  accelerate MCMC sampling.  The GMsFE model order reduction method is used  to build the sample vector $b$ and  we have the surrogate model $(\ref{gpc_gms})$, which is used to approximate the posterior distribution and then to explore the posterior state.

Generally, the MH algorithm starts from a random initial value, a Markov chain generates trial moves from the current state $z^{(j)}$ to a new state $z^*$ by acceptance probability:
  \[
    \alpha(z^{(j)},z^*)=\text{min}\{1, \frac{\pi(z^*)q(z^{(j)}|z^*)}{\pi(z^{(j)})q(z^*|z^{(j)})}\},
  \]
where $\pi$ is the target distribution and $q$ is the proposal distribution, of which the scale and orientation will affect the efficiency of the MH algorithm. When the proposal distribution is too wide, many candidate points may be rejected. This will lead to  long mixing time for the chain and slow convergence to the target distribution. On the other hand, when the proposal distribution is too narrow, a high acceptance rate may  make the moved distance so small that a large number of updates will be required to converge to the target distribution. The choice of the proposal distribution is crucial in determining the practical applicability of MCMC simulation in many fields of study.

In the paper, we take $q(z^*|z^{(j)})=U(z^{(j)}-\varepsilon, z^{(j)}+\varepsilon)$, where $\varepsilon$ is the scale of the random walk. The proposal distribution $q$ does not have to be symmetric. The Gibbs sampler is a special kind of Metropolis-Hastings algorithm, in which the proposal distribution is full conditional distributions and the acceptance probability is identically one. For a $n_z-$dimensinal random vector $z$, if the full conditional probability density function of each component is attainable, the Gibbs method can be used to speed up the chain convergence. Let $\pi^{d,M}$ be the  posterior distribution constructed by using GMsFEM, i.e.,
 \[
  \pi^{d,M}(z)\propto \exp\bigg(-\frac{\|d-G^M(z)\|_2^2}{2\sigma^2}\bigg)\pi(z),
 \]
and $\pi_N^{d,M}$ the approximate posterior distribution constructed by combing GMsFEM with LS-SCM, i.e.,
 \[
  \pi^{d,M}_N(z)\propto \exp\bigg(-\frac{\|d-G_N^M(z)\|_2^2}{2\sigma^2}\bigg)\pi(z).
 \]
 Table \ref{MH-Gibbs} shows the steps  of MH algorithm and Gibbs sampling method when GMsFEM and LS-SCM are used in the computation.
\begin{table}
  \centering
  \caption{MH algorithm and Gibbs sampling method}\label{MH-Gibbs}
  \begin{tabular}{l}
    \hline
  \bf{Algorithm 1}: The MH algorithm \rule{0pt}{1cm}\\
  1. Given $z^{(k)}$, draw $z^*\sim q(\cdot|z^{(k)})$;\rule{0pt}{0.5cm}\\
  2. Calculate the acceptance probability \\
     $\alpha(z^{(j)},z^*)=\text{min}\{1, \frac{\tilde \pi_N^{d,M}(z^*)}{\tilde \pi_N^{d,M}(z^{(j)})}\}$
  \\
  3. With probability $\alpha$, accept and set $z^{(k+1)}=z^*$, otherwise set $z^{(k+1)}=z^{(k)}$.\rule{0pt}{0.5cm}\\
  \bf{Algorithm 2}: The Gibbs sampling algorithm \rule{0pt}{1cm}\\
  1. Initialize $z^{(0)}$; \rule{0pt}{0.5cm}\\
  2. For $j=1:N$ \rule{0pt}{0.5cm}\\
  $z^{(j)}$ can be sampled by \rule{0pt}{0.5cm}\\
  $\bullet$ sample $z_1^{(j)}$ $\sim$ $\pi^{d,M}(z_1|z_2^{(j-1)}, z_3^{(j-1)}, \cdots, z_{n_z}^{(j-1)})$ \rule{0pt}{0.5cm}\\
  $\bullet$ sample $z_2^{(j)}$ $\sim$ $\pi^{d,M}(z_2|z_1^{(j)}, z_3^{(j-1)}, \cdots, z_{n_z}^{(j-1)})$ \rule{0pt}{0.5cm}\\
  $\bullet$ \vdots \rule{0pt}{0.5cm}\\
  $\bullet$ sample $z_{n_z}^{(j)}$ $\sim$ $\pi^{d,M}(z_1|z_1^{(j)}, z_2^{(j)}, \cdots, z_{{n_z}-1}^{(j)})$ \rule{0pt}{0.5cm}\\
    end for \rule{0pt}{0.5cm}\\
    \hline
  \end{tabular}
\end{table}


\section{Convergence analysis}

  To study the convergence of posterior using the reduced order method, we use Kullback-Leibler (KL) divergence \cite{ym09} to quantify the difference between the exact posterior and the approximated posterior.
  For probability density functions $\pi_1(z)$ and $\pi_2(z)$, KL divergence  is defined by
  \[
  D_{KL}(\pi_1||\pi_2)=\int \pi_1(z) \log \frac{\pi_1(z)}{\pi_2(z)}dz.
  \]
  $D_{KL}$ measures the difference between two probability distributions and is nonnegative.  It is vanished if and only if $\pi_1=\pi_2$.

\begin{lem}\cite{ly15}\label{lemm}
Suppose the functions $G$ and $\tilde G_N^M$ are under some assumption, and the observational error has an i.i.d. Gaussian distribution. If  the prior of Z is uniform, then the approximation posterior $\tilde \pi_N^{d,M}$ and the true posterior density $\pi^d$ are close with respect to the Kullback-Leibler distance, i.e.,  there is a constant $C$, independent of $N$, such that
  \[
  D_{KL}(\tilde \pi_N^{d,M}\|\pi^d)\leq C\sum_{i=1}^{n_d} \|G_i(z)-\tilde G_{N,i}^M(z)\|_{L_{\pi_z}^2}^2.
  \]
\end{lem}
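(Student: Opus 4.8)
\emph{Proof strategy.} The plan is to exploit the explicit Gaussian form of the likelihood so that the logarithm of the ratio of the two posterior densities becomes an affine function of the misfit difference $\eta(z):=\|d-G(z)\|_2^2-\|d-\tilde G_N^M(z)\|_2^2$, and then to control $\eta$ pointwise by the surrogate error $\|G(z)-\tilde G_N^M(z)\|_2$. First I would write both posteriors against the common (uniform) prior $\pi$,
\[
\pi^d(z)=\tfrac1Z\,e^{-\|d-G(z)\|_2^2/(2\sigma^2)}\,\pi(z),\quad
\tilde\pi_N^{d,M}(z)=\tfrac1{\tilde Z}\,e^{-\|d-\tilde G_N^M(z)\|_2^2/(2\sigma^2)}\,\pi(z),
\]
the Gaussian prefactor $(2\pi\sigma^2)^{-n_d/2}$ cancelling in every ratio below. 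From the elementary identity $\eta=\langle G-\tilde G_N^M,\,2d-G-\tilde G_N^M\rangle$, Cauchy--Schwarz, and the standing boundedness assumption on $G$, $\tilde G_N^M$ and the fixed data $d$, one gets a pointwise bound $|\eta(z)|\le R\,\|G(z)-\tilde G_N^M(z)\|_2$ together with a uniform bound $|\eta(z)|\le R'$, with $R,R'$ independent of $N$.

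Next I would eliminate the unknown ratio of normalizing constants. Writing $e^{-\|d-G\|_2^2/(2\sigma^2)}=e^{-\|d-\tilde G_N^M\|_2^2/(2\sigma^2)}e^{-\eta/(2\sigma^2)}$ inside the integral defining $Z$ gives $Z=\tilde Z\,\mathbb{E}_{\tilde\pi_N^{d,M}}\!\big[e^{-\eta/(2\sigma^2)}\big]$, hence
\[
D_{KL}(\tilde\pi_N^{d,M}\|\pi^d)=\frac1{2\sigma^2}\,\mathbb{E}_{\tilde\pi_N^{d,M}}[\eta]+\log\mathbb{E}_{\tilde\pi_N^{d,M}}\!\big[e^{-\eta/(2\sigma^2)}\big].
\]
Applying $\log(1+t)\le t$ with $t=\mathbb{E}_{\tilde\pi_N^{d,M}}[e^{-\eta/(2\sigma^2)}]-1\ge0$ bounds the right-hand side by $\mathbb{E}_{\tilde\pi_N^{d,M}}\!\big[e^{-\eta/(2\sigma^2)}-1+\eta/(2\sigma^2)\big]$. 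Since $|\eta|\le R'$, the Taylor-remainder estimate $e^{-s}-1+s\le\tfrac12 e^{|s|}s^2$ with $s=\eta/(2\sigma^2)$ yields $D_{KL}\le C_1\,\mathbb{E}_{\tilde\pi_N^{d,M}}[\eta^2]\le C_1R^2\,\mathbb{E}_{\tilde\pi_N^{d,M}}\!\big[\|G-\tilde G_N^M\|_2^2\big]$ for a constant $C_1=C_1(\sigma,R')$ independent of $N$.

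Finally I would transfer this expectation from the approximate posterior to the prior. Since $\pi$ is uniform on a bounded set and $\|d-\tilde G_N^M(z)\|_2$ is uniformly bounded, the likelihood is bounded above and $\tilde Z$ bounded below by positive constants, both uniformly in $N$; thus $\tilde\pi_N^{d,M}(z)\le C_2\,\pi(z)$ with $C_2$ independent of $N$, and therefore
\[
\mathbb{E}_{\tilde\pi_N^{d,M}}\!\big[\|G-\tilde G_N^M\|_2^2\big]\le C_2\int\|G(z)-\tilde G_N^M(z)\|_2^2\,\pi(z)\,dz=C_2\,\|G-\tilde G_N^M\|_{L_{\pi_z}^2}^2.
\]
Since $\|G-\tilde G_N^M\|_{L_{\pi_z}^2}^2=\sum_{i=1}^{n_d}\|G_i-\tilde G_{N,i}^M\|_{L_{\pi_z}^2}^2$, the lemma follows with $C=C_1R^2C_2$.

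The only genuine obstacle is keeping all the constants independent of $N$: this is exactly what the (here unstated) regularity hypothesis on $G$ and $\tilde G_N^M$ must supply, namely a uniform-in-$N$ $L^\infty$ bound on the surrogates — for instance that all the maps take values in a single fixed compact set — so that $R$, $R'$, $C_1$, $C_2$ do not deteriorate as $N\to\infty$. A secondary subtlety is the order of the estimates: a crude Hoeffding-type bound on $\log\mathbb{E}[e^{-\eta/(2\sigma^2)}]-\mathbb{E}[\eta/(2\sigma^2)]$ only gives an $O((R')^2)$ estimate, whereas combining $\log(1+t)\le t$ with the quadratic Taylor remainder is what produces the sharp dependence on $\|G-\tilde G_N^M\|^2$ required in the statement.
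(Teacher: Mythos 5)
The paper offers no proof of this lemma---it is imported from \cite{ly15} (following \cite{ym09})---and your argument is a correct reconstruction of essentially that standard proof: pointwise control of the log-likelihood difference $\eta$ by the forward-model error via the boundedness hypothesis, elimination of the normalizing constants through $Z=\tilde Z\,\mathbb{E}_{\tilde\pi_N^{d,M}}[e^{-\eta/(2\sigma^2)}]$, a second-order Taylor bound on $e^{-s}-1+s$, and the change of measure $\tilde\pi_N^{d,M}\leq C_2\pi$ to land in $L^2_{\pi_z}$; your reading of the unstated ``some assumption'' as a uniform-in-$N$ $L^\infty$ bound on $G$ and $\tilde G_N^M$ is exactly the hypothesis used there. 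One cosmetic slip: $t=\mathbb{E}_{\tilde\pi_N^{d,M}}[e^{-\eta/(2\sigma^2)}]-1$ need not be nonnegative (Jensen only gives $\mathbb{E}[e^{-s}]\geq e^{-\mathbb{E}[s]}$), but since $\log(1+t)\leq t$ holds for all $t>-1$ the step is unaffected.
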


\begin{thm}
The approximation $\tilde G_N(z)$ obtained by LS-SCM converges to $G_N(z)$, i.e.,
  \[
  \|G_{N,i}(z)-\tilde G_{N,i}(z)\|_{L_{\pi_z}^2}\rightarrow 0, \quad \text{for} \quad i=1,\cdots,n_d,
  \]
as $Q\rightarrow\infty$.
\end{thm}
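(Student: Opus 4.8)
The plan is to reduce the claim to convergence of the gPC coefficient vectors and then invoke the strong law of large numbers. Fix an output component index $i\in\{1,\dots,n_d\}$. Since $\{\Phi_j\}_{j=1}^P$ is orthonormal in $L_{\pi_z}^2$ by (\ref{ortho-basis}), expanding $G_{N,i}-\tilde G_{N,i}=\sum_{j=1}^P(c_j-\tilde c_j)\Phi_j$ and using Parseval's identity gives
\[
\|G_{N,i}(z)-\tilde G_{N,i}(z)\|_{L_{\pi_z}^2}^2=\sum_{j=1}^P|c_j-\tilde c_j|^2=\|c-\tilde c\|_2^2 ,
\]
where $c=(c_1,\dots,c_P)^T$ collects the exact coefficients $c_j=\mathbb{E}[G_i(Z)\Phi_j(Z)]$ and $\tilde c=(V^TV)^{-1}V^Tb$ is the least-squares solution in (\ref{coef_ls}). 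Hence it suffices to prove $\tilde c\to c$ as $Q\to\infty$.

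Next I would normalize the normal equations, writing $\tilde c=\Big(\frac1Q V^TV\Big)^{-1}\Big(\frac1Q V^Tb\Big)$, and study the two factors separately. The $(m,n)$ entry of $\frac1Q V^TV$ is $\frac1Q\sum_{k=1}^Q\Phi_m(z^{(k)})\Phi_n(z^{(k)})$, and the $m$-th entry of $\frac1Q V^Tb$ is $\frac1Q\sum_{k=1}^Q\Phi_m(z^{(k)})G_i(z^{(k)})$. Because the collocation nodes $\{z^{(k)}\}_{k=1}^Q$ are i.i.d. draws from $\pi_z$, and because $\Phi_m\in L_{\pi_z}^2$ while $G_i\in L_{\pi_z}^2$ (so that, by Cauchy--Schwarz, the relevant integrands lie in $L_{\pi_z}^1$), the strong law of large numbers yields, almost surely,
\[
\frac1Q V^TV\longrightarrow\big[\mathbb{E}(\Phi_m\Phi_n)\big]_{m,n}=\mathbb{I}_P,\qquad \frac1Q V^Tb\longrightarrow\big[\mathbb{E}(G_i\Phi_m)\big]_m=c,
\]
the first limit being exactly the orthonormality relation (\ref{ortho-basis}) (here $\mathbb{I}_P$ denotes the $P\times P$ identity).

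To finish I would pass to the limit in the normalized normal equations. Since the limit matrix $\mathbb{I}_P$ is nonsingular and matrix inversion is continuous near $\mathbb{I}_P$, along almost every sample path $\frac1Q V^TV$ is invertible for all sufficiently large $Q$ and $\big(\frac1Q V^TV\big)^{-1}\to\mathbb{I}_P$; combined with $\frac1Q V^Tb\to c$ this gives $\tilde c\to c$ almost surely, hence $\|G_{N,i}-\tilde G_{N,i}\|_{L_{\pi_z}^2}\to0$ for each $i$. The main obstacle is this last step: for finite $Q$ one must ensure that $V^TV$ is invertible so that $\tilde c$ is even well defined, and the statement is really an ``eventually, almost surely'' one rather than a pathwise-uniform bound. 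This is where the oversampling rule (\ref{ls_rule}), $Q=an_z(N+1)^2$ with $a\ge1$, becomes relevant: known stability estimates for discrete least squares (cf.\ \cite{mc15,ly15}) show that under such oversampling $\|\frac1Q V^TV-\mathbb{I}_P\|$ is small with high probability, which bounds $\big(\frac1Q V^TV\big)^{-1}$ uniformly and upgrades the above qualitative statement to a quantitative convergence rate in $Q$.
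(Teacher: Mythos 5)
Your argument is correct and rests on the same core mechanism as the paper's proof: both identify $\|G_{N,i}-\tilde G_{N,i}\|_{L_{\pi_z}^2}$ with $\|c-\tilde c\|_2$ via orthonormality of the $\Phi_j$, and both observe that $\frac{1}{Q}V^TV$ and $\frac{1}{Q}V^Tb$ are Monte Carlo approximations of the exact Gram matrix $X=\mathbb{I}$ and moment vector $r=c$. Where you diverge is the finishing step. The paper keeps the normal equations in the form $\frac{1}{Q}V^TV\tilde c=\frac{1}{Q}V^Tb$ and applies a reverse triangle inequality to $\|Xc-\frac{1}{Q}V^TV\tilde c\|_2$, which yields the explicit rate $\|c-\tilde c\|_2\lesssim\sqrt{P/Q}+\sqrt{P^2/Q}$; this rate is what supplies the $O(Q^{-1})$ term in the Kullback--Leibler bound of the following theorem. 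That derivation, however, silently assumes that $\|\tilde c\|_2$ stays bounded and that $V^TV$ is invertible, and treats the $O(Q^{-1/2})$ Monte Carlo error as if it were deterministic. You instead invert the normalized Gram matrix and use continuity of inversion at $\mathbb{I}_P$ together with the strong law of large numbers, which gives a clean almost-sure qualitative convergence and, commendably, makes explicit both the invertibility issue and the ``eventually, almost surely'' nature of the claim --- precisely the points the paper glosses over. The trade-off is that your main argument produces no rate; you correctly note that a quantitative version requires the oversampling rule $Q=an_z(N+1)^2$ and stability estimates for discrete least squares, which is where the two proofs would reconverge.
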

\begin{proof}
Note that the coefficient in $(\ref{gpc_t})$ is calculated by $(\ref{coef_t})$, which can be written in the matrix form
  \begin{equation}\label{best-fit}
  Xc=r,
  \end{equation}
where the entries are given by
  \begin{eqnarray*}
  X_{i,j} &=& \int \Phi_i \Phi_j\pi(z)dz,\\
  r_j &=& \int G(z)\Phi_j\pi(z)dz.
  \end{eqnarray*}
  Due to the orthogonality of basis $\{\Phi_i(Z)\}$ defined in $(\ref{ortho-basis})$, $X$ is an identity matrix. In the paper, we actually use numerical integration to approximate the matrix $X$ and vector $r$.  The coefficient $\tilde c$ computed by $(\ref{coef_ls})$ satisfies
  \[
  V^TV\tilde c=V^Tb,
  \]
By multiplying  $\frac{1}{Q}$ at both sides of the above equation, we have
  \begin{equation}\label{Q-normaleq}
    \frac{1}{Q}V^TV\tilde c=\frac{1}{Q}V^Tb.
  \end{equation}
It can be seen that $\frac{1}{Q}V^TV$ and $\frac{1}{Q}V^Tb$ are the approximate of $X$ and $r$ in the sense of Monte Carlo integration, respectively. Thus, as $Q\rightarrow\infty$,
  \begin{eqnarray*}
  (\frac{1}{Q}V^TV)_{i,j} &\rightarrow& X_{i,j} \\
  (\frac{1}{Q}V^Tb)_{j} &\rightarrow& r_j, \ \ \text{for} \quad 1\leq i ,j \leq P.
  \end{eqnarray*}
Because  the approximation  error in Monte carlo integration is $O(\frac{1}{\sqrt Q})$. Then we can have the  estimation
  \[
  \|r-\frac{1}{Q}V^Tb\|_2\lesssim \sqrt \frac{P}{Q}.
  \]
On the other hand,  equation $(\ref{best-fit})$ and $(\ref{Q-normaleq})$ implies that
  \begin{eqnarray*}
   \|r-\frac{1}{Q}V^Tb\|_2&=&\|Xc-\frac{1}{Q}V^TV\tilde c\|_2\\
   &\geq& \|X(c-\tilde c)\|_2-\|(X-\frac{1}{Q}V^TV)\tilde c\|_2\\
   &=& \|c-\tilde c\|_2-\|(X-\frac{1}{Q}V^TV)\tilde c\|_2.
  \end{eqnarray*}
Because
  \[
  \|X-\frac{1}{Q}V^TV\|_2\leq \|X-\frac{1}{Q}V^TV\|_F\|\tilde c\|_2,
  \]
where $\| \cdot \|_F$ is the Fubini norm,  we have
  \[
  \|X-\frac{1}{Q}V^TV\|_2\lesssim \sqrt \frac{P^2}{Q}.
  \]
Consequently, it follows that
  \[
  \|c-\tilde c\|_2\lesssim \sqrt \frac{P}{Q}+\sqrt \frac{P^2}{Q}.
  \]
By  $(\ref{gpc_t})$ and $(\ref{gpc_ls})$, we have
  \begin{eqnarray*}
  \|G_{N,i}(z)-\tilde G_{N,i}(z)\|_{L_{\pi_z}^2} &=& \bigg(\int [\sum_{j=1}^P (c_j-\tilde c_j)\Phi_j]^2\pi(z)dz\bigg)^{1/2} \\
  &=& \|c-\tilde c\|_2\\
  &\lesssim & \sqrt \frac{P}{Q}+\sqrt \frac{P^2}{Q}.
  \end{eqnarray*}
As $Q\rightarrow\infty$, it will tends to be zero, i.e.,
  \[
  \lim_{Q\rightarrow\infty}\|G_{N,i}(z)-\tilde G_{N,i}(z)\|_{L_{\pi_z}^2}=0.
  \]
\end{proof}
\begin{thm}\label{G-GM}
Let the model order reduction error from GMsFEM be given by
  \[
  \|b-b^M\|_2 \lesssim \mathcal{E}_{ms},
  \]
where $b=(G_i(z^{(1)}), \cdots, G_i(z^{(Q)}))^T$ and $b^M=(G_i^M(z^{(1)}), \cdots, G_i^M(z^{(Q)}))^T$. Then
\[
  D_{KL}(\tilde \pi_N^{d,M}\|\pi^d)\lesssim n_d \bigg[ O(N^{-2\alpha})+O(Q^{-1})+\mathcal{E}_{ms}^2\bigg].
  \]
\end{thm}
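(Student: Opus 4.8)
The plan is to reduce everything to Lemma \ref{lemm} and then estimate the three natural sources of error separately. First I would apply Lemma \ref{lemm} to get
\[
D_{KL}(\tilde \pi_N^{d,M}\|\pi^d)\leq C\sum_{i=1}^{n_d}\|G_i(z)-\tilde G_{N,i}^M(z)\|_{L_{\pi_z}^2}^2 .
\]
For each component $i$ I would insert the exact gPC expansion $G_{N,i}$ of (\ref{gpc_t}) and the LS-SCM approximation $\tilde G_{N,i}$ of (\ref{gpc_ls}) built from the full-model data vector $b$, and use the triangle inequality
\[
\|G_i-\tilde G_{N,i}^M\|_{L_{\pi_z}^2}\leq \|G_i-G_{N,i}\|_{L_{\pi_z}^2}+\|G_{N,i}-\tilde G_{N,i}\|_{L_{\pi_z}^2}+\|\tilde G_{N,i}-\tilde G_{N,i}^M\|_{L_{\pi_z}^2},
\]
where $\tilde G_{N,i}^M$ is the approximation (\ref{gpc_gms}) obtained from the GMsFEM data $b^M$. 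The three terms are, respectively, the gPC truncation error, the least-squares sampling error, and the model-reduction error.

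The first term is bounded by the spectral estimate (\ref{gpc_error}): under the smoothness assumption on $G$ it is $O(N^{-\alpha})$, where $\alpha$ is the exponent $p$ appearing in (\ref{gpc_error}). The second term is exactly the content of the preceding theorem, which gives $\|G_{N,i}-\tilde G_{N,i}\|_{L_{\pi_z}^2}\lesssim \sqrt{P/Q}+\sqrt{P^2/Q}$; since the gPC order $N$ is held fixed, $P$ is a constant and this is $O(Q^{-1/2})$. For the third term I would use the orthonormality (\ref{ortho-basis}) of the basis to pass from functions to coefficients, $\|\tilde G_{N,i}-\tilde G_{N,i}^M\|_{L_{\pi_z}^2}=\|\tilde c-\tilde c^M\|_2$, and then observe from (\ref{coef_ls}) that both coefficient vectors result from applying the \emph{same} least-squares operator to the two data vectors, so $\tilde c-\tilde c^M=(V^TV)^{-1}V^T(b-b^M)$. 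Hence
\[
\|\tilde c-\tilde c^M\|_2\leq \|(V^TV)^{-1}V^T\|_2\,\|b-b^M\|_2\lesssim \mathcal{E}_{ms},
\]
using that, as shown in the proof of the previous theorem, $\tfrac1Q V^TV\to X=I$, so the least-squares matrix is well conditioned and $\|(V^TV)^{-1}V^T\|_2$ is bounded (in fact $O(Q^{-1/2})$, which only improves the bound).

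Finally I would combine the three estimates via $(a+b+c)^2\le 3(a^2+b^2+c^2)$, square, and sum over $i=1,\dots,n_d$, which produces the factor $n_d$ and yields
\[
D_{KL}(\tilde \pi_N^{d,M}\|\pi^d)\lesssim n_d\big[O(N^{-2\alpha})+O(Q^{-1})+\mathcal{E}_{ms}^2\big].
\]
The step I expect to be the genuine obstacle is the third-term bound, specifically controlling $\|(V^TV)^{-1}V^T\|_2$ (equivalently $1/\sigma_{\min}(V)$): this is a probabilistic statement about the random collocation matrix $V$, and a fully rigorous argument must invoke a matrix concentration bound showing that the sampling rule (\ref{ls_rule}), $Q=an_z(N+1)^2$, forces $\tfrac1Q V^TV$ to stay near the identity with high probability. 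Secondary care points are matching the smoothness exponent between (\ref{gpc_error}) and the statement, and remembering that $P$ is constant once $N$ is fixed so that $\sqrt{P^2/Q}$ is truly $O(Q^{-1/2})$.
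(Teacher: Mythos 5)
Your proposal is correct and follows essentially the same route as the paper: apply Lemma \ref{lemm}, split $\|G_i-\tilde G_{N,i}^M\|_{L_{\pi_z}^2}$ by the triangle inequality into the gPC truncation, least-squares sampling, and GMsFEM errors, and bound the last via $\tilde c-\tilde c^M=(V^TV)^{-1}V^T(b-b^M)$. If anything you are more careful than the paper, which simply asserts $\|(V^TV)^{-1}V^T(b-b^M)\|_2\leq C_2\|b-b^M\|_2$ without the concentration argument for $\sigma_{\min}(V)$ that you correctly flag as the real gap.
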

\begin{proof}
By $(\ref{gpc_ls})$ and $(\ref{gpc_gms})$, we have
  \begin{eqnarray*}
  \|\tilde G_{N,i}(z)-\tilde G_{N,i}^M(z)\|_{L_{\pi_z}^2} &=& \bigg(\int [\sum_{j=1}^P (\tilde c_j-\tilde c_j^M)\Phi_j]^2\pi(z)dz\bigg)^{1/2} \\
   &=& \|\tilde c-\tilde c^M\|_2\\
   &=& \|(V^TV)^{-1}V^T(b-b^M)\|_2\\
   &\leq& C_2\|b-b^M\|_2\\
   &\lesssim& \mathcal{E}_{ms}.
  \end{eqnarray*}
The triangle inequality gives
  \begin{eqnarray*}
  \|G_i(z)-\tilde G_{N,i}^M(z)\|_{L_{\pi_z}^2} &\leq& \|G_i(z)-G_{N,i}(z)\|_{L_{\pi_z}^2}+\|G_{N,i}(z)-\tilde G_{N,i}(z)\|_{L_{\pi_z}^2}\\
  &+&\|\tilde G_{N,i}(z)-\tilde G_{N,i}^M(z)\|_{L_{\pi_z}^2} \\
  &\lesssim& O(N^{-\alpha})+O(Q^{-\frac{1}{2}})+\mathcal{E}_{ms},
  \end{eqnarray*}
By  Lemma $\ref{lemm}$, we immediately  have
  \[
  D_{KL}(\tilde \pi_N^{d,M}\|\pi^d)\lesssim n_d \bigg[ O(N^{-2\alpha})+O(Q^{-1})+\mathcal{E}_{ms}^2\bigg].
  \]
\end{proof}

\begin{rem}
If we take the first $M$ dominant eigenfunctions in (\ref{MS-basis}) at each coarse node to construct GMsFE space and denote the $M$-th eigenvalue by $\lambda_M$,
then under certain assumptions (see \cite{ye11}),  the solution error $\mathcal{E}_{ms}$ for the subsurface model by GMsFEM  is
\[
\mathcal{E}_{ms}=O\big(\sqrt{H\over \lambda_M}+H\big),
\]
where $H$ is the size of coarse cells.
\end{rem}


\section{Numerical examples}
In this section, we use GMsFEM and LS-SCM to build a reduced computational model  for the equation (\ref{flow-eq}),  and recover the model's inputs using Bayesian framework.
In Subsection \ref{rec-init}, we recover initial condition. Subsection \ref{rec-loc} is devoted to the inversion of source location.
In Subsection \ref{rec-loc-flux}, we combine the techniques presented in Subsection \ref{rec-init} and Subsection \ref{rec-loc}, and recover source location and flux simultaneously.
In all our numerical examples, we consider the model equation (\ref{flow-eq}) in a high-contrast permeability field, whose profile is depicted in   Figure \ref{perm}.
 For the numerical examples,  we consider a dimensionless  square domain $\Omega:=[0,1]\times[0,1]$ for space, and $(0, T]=(0, 0.1]$ for time.
We will compare the results by using full-order model and the reduced order model, and analyze the approximation for different estimated parameters.

\begin{figure}[htbp]
  \centering
  \includegraphics[width=0.6\textwidth]{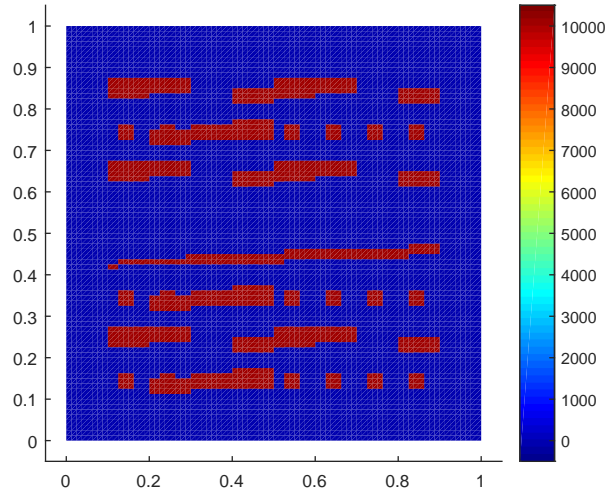}
  \caption{The spatial distribution of the high contrast coefficient $k(x)$}
  \label{perm}
\end{figure}

\subsection{Recover initial condition}
\label{rec-init}

  In this subsection, we want to recover the initial condition based on some measurements.
  We consider the model with the  following boundary condition and source term
  \begin{equation}
  \label{ex1}
    \left\{
  \begin{aligned}
  \frac{\partial u}{\partial t}&=\text{div}\bigg(k(x)\nabla u(x,t)\bigg)+f,\ x\in\Omega,t\in(0,T]\\
  k\frac{\partial u}{\partial n}&=0,\ \text{on}\ \partial\Omega, t\in(0,T]\\
  u(x,0)&=u_0(x),\ \ x\in\Omega,
  \end{aligned}
     \right.
  \end{equation}
where the source term is given by
  \[
  f(x)=10\exp\bigg(-\frac{(x_1-0.2)^2+(x_2-0.4)^2}{2\times0.2^2}\bigg).
  \]

The goal of the example is to reconstruct the initial condition $u_0(x)$.
For simulation, we need to represent the function $u_0(x)$   in a finite dimensional space.  To this end, we project the initial function onto a subspace spanned by $m$ finite element basis functions $\{\xi_i\}_{i=1}^m$. Thus  the initial condition can be represented as
  \begin{equation}
  \label{initial}
    u_0(x)=\sum_{i=1}^{m} z_i\xi_i(x).
  \end{equation}
 In the paper,  we take $\xi_i(x)$ to be  the bilinear finite element basis functions.  The parameter $z\in\mathbb{R}^m$, where the dimension  depends on the discretization. The finer the discretization is, the better the approximation is. But this in turn leads to a larger dimension of $z$. We note that the high dimension of estimated parameters would increase the mixed time of Markov chains. To overcome the difficulty, we will use Karhunen-Loeve expansion (KLE) technique to reduce the dimension.

As the initial condition is  spatially varying and unknown  to us, we can treat it as a random field $u_0(x,\omega)$.  We use KLE  and the random field $u_0(x,\omega)$ can be represented as
  \begin{equation}
  \label{initial_kle}
    u_0(x,\omega)=\mathbb{E}[u_0(x,\omega)]+\sum_{i=1}^{\infty} \sqrt{\zeta_i}\eta_i(\omega)\varphi_i(x),
  \end{equation}
 where $\varphi_i$ are the orthogonal eigenfunctions and $\zeta_i$ are the corresponding eigenvalues of the eigenvalue problem,
  \[
  \int_\Omega C(x,x')\varphi_i(x')dx'=\zeta_i\varphi_i(x),\quad \quad i=1,2,\cdots.
  \]
Here  $C$ is the covariance function defined by
  \[
  C(x,x')=\mathbb{E}[u_0(x,\omega)u_0(x',\omega)].
  \]
We note that $\mathbb{E}[\eta_i]=0$, $\mathbb{E}[\eta_i\eta_j]=\delta_{ij}$, and bilinear finite element basis functions  are used to discretize the eigenvalue problem described above. We truncate the KLE $(\ref{initial_kle})$ to a finite number of terms and keep only the leading-order terms to capture most of the energy of the stochastic process.
We truncate the first $m_0-$ terms for the approximation
   \[
    u_0(x,\omega)\approx  \mathbb{E}[u_0(x,\omega)]+ \sum_{i=1}^{m_0} \sqrt{\zeta_i}\eta_i(\omega)\varphi_i(x).
  \]
The energy ratio of the approximation is defined by
  \[
  e(m_0)=\frac{\sum_{i=1}^{m_0}\zeta_i}{\sum_{i=1}^{\infty}\zeta_i},
  \]
and we set $e(m_0)=0.95$ in the examples.  Then the relationship between $z$ and $\eta$ can be expressed by
  \begin{equation}
  \label{kle_relation}
  \theta=\mathbf{B}\eta,
  \end{equation}
where $\eta\in\mathbb{R}^{m_0}$ and $\mathbf{B}\in\mathbb{R}^{m\times m_0}$, which is defined by
  \[
  \mathbf{B}=\big[\sqrt{\zeta_1}\varphi_1, \sqrt{\zeta_2}\varphi_2, \cdots, \sqrt{\zeta_{m_0}}\varphi_{m_0}\big].
  \]

As the solution of $(\ref{ex1})$ depends linearly on the initial function, we have the following approximation
  \begin{equation}
  \label{H-I}
  u(x)=Hz+I,
  \end{equation}
where $H$ is the sensitivity matrix \cite{jw04} defined by
  \[
  H=\big[u(\xi_1(x))\quad u(\xi_2(x))\quad \cdots\quad u(\xi_m(x))\big].
  \]
Here $u(\xi_i(x))\in\mathbb{R}^{n_d}$ denotes the solution at measured sensor network with initial condition $\xi_i(x)$, zero source term and homogeneous Neumann boundary condition. The $I$ in equation (\ref{H-I}) represents the solution when initial condition and boundary condition is $0$ but the source term is $f$. The sensitivity matrix is required to be full column rank here. Let $z\in\mathbb{R}^m$ be the coefficient in $(\ref{initial})$. Then the relation between the unknown vector $z$ and observation $d$ is:
  \[
   d=Hz+I+e,
  \]
where $e$ is the Gaussian noise with standard deviation $\sigma$. Thus  the likelihood function is given by
  \[
  L(z)=(2\pi\sigma^2)^{-\frac{n_d}{2}}\exp\bigg(-\frac{\|d-Hz-I\|_2^2}{2\sigma^2}\bigg).
  \]
We use  a Gibbs sampler for the case when Markov Random Field (MRF) \cite{jw06} is selected as the prior density in this example. The MRF takes the form,
  \[
   \pi(z)\propto \gamma^{m/2}\exp(-\frac{1}{2}\gamma z^TWz),
  \]
where the entries of the $m\times m$ matrix $W$ is specified as following: $W_{ij}=n_i$ if $i=j$, $W_{ij}=-1$ if $i$ and $j$ are adjacent, and as $0$ otherwise. Here the $n_i$ is the number of neighbors adjacent to site $i$. In general, the neighbors to a particular unknown at a given location of a finite lattice refer to unknowns at adjacent points on the same lattice. $W$ determines the dependence between components of $z$, and various dependence relations among variables can be characterized  by changing the form of $W$. The $\gamma$ controls the strength of spatial dependence and regularization to the inverse problem, which should be tuned relying on one's experience. We treat it as a hyperparameter and choose Gamma distribution as the its hyperprior density. It can be used as conjugate prior distribution \cite{pc07} here, i.e.,
  \[
  \pi(\gamma)=\frac{\beta_1^{\alpha_1}}{\Gamma(\alpha_1)}\gamma^{\alpha_1-1}\exp(-\beta_1\gamma),\quad\gamma>0,\quad\alpha_1>0,\beta_1>0,
  \]
where $\alpha_1$ is the shape and $\beta_1$ is the rate. Then the joint posterior density is
  \[
  \pi(z,\gamma|d)\propto\frac{\beta_1^{\alpha_1}}{\Gamma(\alpha_1)}\gamma^{\alpha_1+\frac{m}{2}-1}\exp\{-(\beta_1+\frac{1}{2}z^TWz)\gamma\}
                        \exp\bigg(-\frac{\|d-I-Hz\|^2}{2\sigma^2}\bigg),
  \]
and the conditional posterior distributions can be derived as
  \begin{equation}
  \label{initial_z}
  \pi(z|d,\gamma) \propto \exp\bigg(-\frac{\|d-I-Hz\|^2}{2\sigma^2}\bigg)\exp(-\frac{\gamma}{2}z^TWz),
  \end{equation}
  \begin{equation}
  \label{initial_la}
   \pi(\gamma|d,z) \sim \Gamma(\alpha_1+\frac{m}{2}, \beta_1+\frac{1}{2}z^TWz).
  \end{equation}
For the convenience of notation, we denote $\mathbf{H}:=H\mathbf{B}$, and $\mathbf{W}:=\mathbf{B}^TW\mathbf{B}$, substitute $(\ref{kle_relation})$ into $(\ref{initial_z})$ and $(\ref{initial_la})$, we obtain the conditional posterior distributions of $\eta$ and $\gamma$, respectively,
  \begin{eqnarray*}
  \pi(\eta|d,\gamma) &\propto& \exp\bigg(-\frac{\|d-I-\mathbf{H}\eta\|^2}{2\sigma^2}\bigg)\exp(-\frac{\gamma}{2}\eta^T\mathbf{W}\eta),\\
  \pi(\gamma|d,\eta) &\sim& \Gamma(\alpha_1+\frac{m_0}{2}, \beta_1+\frac{1}{2}\eta^T\mathbf{W}\eta).
  \end{eqnarray*}
We note that the posterior distribution of $\gamma$ is  easy to update during the Gibbs sampling. In addition, we set parameters $\alpha_1$ and $\beta_1$ in the hyperprior density small so that hyperprior density can nearly be a uniform distribution among the interval $(0, +\infty)$, e.g., $\alpha_1=\beta_1=0.001$. This is the so called noninformative prior.
When the noise level $\sigma$ is known, the posterior distribution of $\eta$ follows a multivariate Gaussian distribution. Moreover, the full conditional distribution of each component $\eta_i$ is in standard form and can be derived as follows \cite{jw04},
  \[
   \pi(\eta_i|\eta_{-i})\sim N(\mu_i,\sigma_i^2),
  \]
where $\eta_{-i}:=(\eta_1, \cdots, \eta_{i-1}, \eta_{i+1}, \cdots, \eta_{m_0})$ and
  \begin{eqnarray*}
  \sigma_i &=& (\sum_{k=1}^n \frac{\mathbf{H}_{ki}^2}{\sigma^2}+\gamma \mathbf{W}_{ii})^{-\frac{1}{2}},\\
  \varrho &=& \sum_{j\neq i} \mathbf{W}_{ji}\eta_j+\sum_{k\neq i} \mathbf{W}_{ik}\eta_k,\\
  \rho_k &=& d_k-(I)_k-\sum_{j\neq k}\mathbf{H}_{kj}\eta_j,\\
  \mu_i &=& \frac{2\sum_{k=1}^n \frac{\rho_k \mathbf{H}_{ki}}{\sigma^2}-\gamma\varrho}{2(\sum_{k=1}^n \frac{\mathbf{H}_{ki}^2}{\sigma^2}+\gamma \mathbf{W}_{ii})}.
  \end{eqnarray*}
Hence, we treat $[\gamma;\eta]$ as big block and update the component of $\eta$ as small block during the sampling, and we get samplers from $\eta-$space and transform them back to $z-$space. As we have discussed, the Gibbs sampler method can provide us much efficiency in simulation, once the sensitivity matrix $H$ and rest term $I$ calculated, we do not need to solve the forward model any more during the simulation, and the computation here is the calculation of the sensitivity matrix $H$.

The forward model is  solved on a uniform $80\times80$ fine grid. If we resolve all scales and solve the forward model in the fine grid, a linear system of equations with $6561$ unknowns would be required to be solved at each time layer during the iteration, and we  have the iterations $m+1$ time levels. In order to reduced the number of unknowns, we use GMsFEM to compute the model, i.e., we use GMsFEM to obtain a sensitivity matrix $H^M$.  The matrix  $\mathbf{H}$ is computed based on  $H^M$ and $\mathbf{B}$ during the sampling of $\eta$. We set $8\times8$ coarse grid for GMsFEM, and select $8$ multiscale basis functions $M$ on each coarse neighborhood. Then the dimension of unknowns solving the PDE at each time layer decreases to $648$.

\begin{figure}[h]
    \centering
    \includegraphics[width=0.6\textwidth]{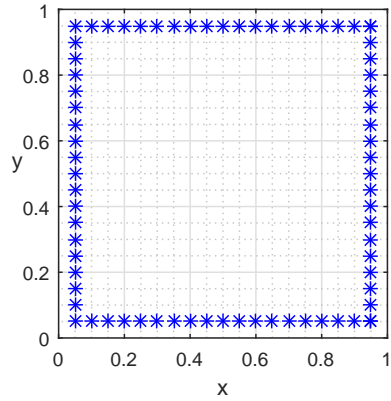}\\
    \caption{The distribution of measurement locations (marked by $*$) in the domain $\Omega$}
\label{mea-loc}
\end{figure}

The observation  data are taken from time $[0.01:0.01:0.1]$, and we measure the finite element solution at points shown as Figure \ref{mea-loc}, with the noise $\sigma=0.01$ in this example. We assume the covariance function has the form
  \[
  C(x_1,x_2;x'_1,x'_2)=\varsigma^2\exp\bigg(-\frac{|x_1-x'_1|^2}{2l_1^2}-\frac{|x_2-x'_2|^2}{2l_2^2}\bigg)
  \]
with $l_1=l_2=0.2$ and $\varsigma^2=2$. We set the true initial function as follows,
\[
u_0(x)=\cos(\pi x_1)\cos(\pi x_2)+1.5.
\]
$u_0$ is represented in a $11\times 11$ grid. In truncated KLE for $u_0$, the dimension of $\eta$ is only $75$, i.e,  $m_0=75$, which is much smaller than the original dimension  $m=11^2$.
The relative $L_2$ error between true initial condition $u_0$ and estimated initial condition $\hat{u}_0$  is defined by
  \[
  r=\frac{\|u_0-\hat{u_0}\|_{L_2}}{\|u_0\|_{L_2}},
  \]
where $\hat{u_0}$ refers to
  \[
   \hat u_0(x)=\sum_{i=1}^{m} \hat{z_i}\xi_i(x)
  \]
and $\hat{z_i}$ is the estimator of $z_i$.
The profiles of true initial condition and estimated initial condition are shown in Figure \ref{comp-init}, from which we can see an accurate estimate for the initial condition.
 \begin{figure}[htbp]
    \centering
    \includegraphics[width=0.7\textwidth]{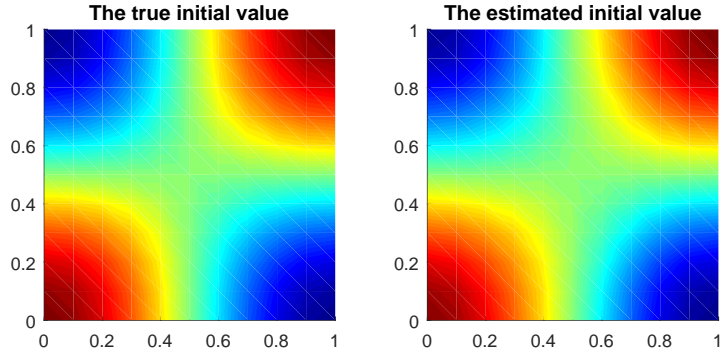}
    \caption{The true initial value (left) vs. the estimated initial value (right),  the relative $L_2$ error is 1.33\% with 1\% noise in the data}
    \label{comp-init}
\end{figure}

 Figure \ref{post-gamma} shows the posterior marginal density of the hyperparameter $\gamma$, where the solid line is obtained by using the full-order model, dashed line is obtained using the surrogate model constructed by GMsFEM, the green one is obtained using $8$ multiscale basis functions per coarse node and the red one using $8$ multiscale basis functions per coarse node. We see in the  figure  that the  distribution  from surrogate model approximates the distribution of full-order model  better as the number of multiscale basis functions increases.

\begin{figure}[h]
    \centering
    \includegraphics[width=0.6\textwidth]{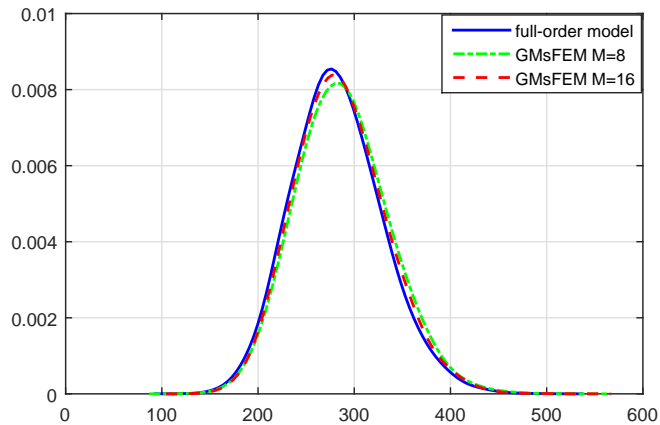}
    \caption{The posterior marginal densities of $\gamma$ with 1\% noise in the data}
   \label{post-gamma}
\end{figure}

We also solve the inverse problem using GMsFEM and reconstruct the initial condition with different noise level.
  Let noise to signal ratio be defined by $\sigma/u_{max}$, where $u_{max}$ is the maximum of solution $u$.
  We list the results in Table \ref{tab2}. From the table,
   we  see that when the noise level is $\sigma=0.01$, the relative error is about $1.33\%$, which has small difference from the case $\sigma=0.001$ but  much difference from the case $\sigma=0.1$. The case noise level $\sigma=0.5$ leads to a big error. This shows that the error of inversion  increases as the noise level increases. The more the measurement error, the worse the estimation is.  As the measurement error is within some appropriate range, we can reconstruct the initial value well.
\begin{table}[hb]
  \centering
  \caption{Results comparison with different measurement noise}\label{tab2}
  \begin{tabular}{c|c|c}
  \hline
   Noise level & Noise to signal ratio & Relative error \rule{0pt}{1cm} \\
   \hline
   0.001 & 0.04\% &1.26\%  \rule{0pt}{0.5cm}\\
  0.01 & 0.4\% & 1.33\%  \rule{0pt}{0.5cm}\\
  0.1 & 4\% & 12.44\% \rule{0pt}{0.5cm}\\
  0.5& 20\% &22.95\% \rule{0pt}{0.5cm}\\
  \hline
\end{tabular}
\end{table}

\begin{figure}[htbp]
    \centering
    \includegraphics[width=0.8\textwidth,height=8in]{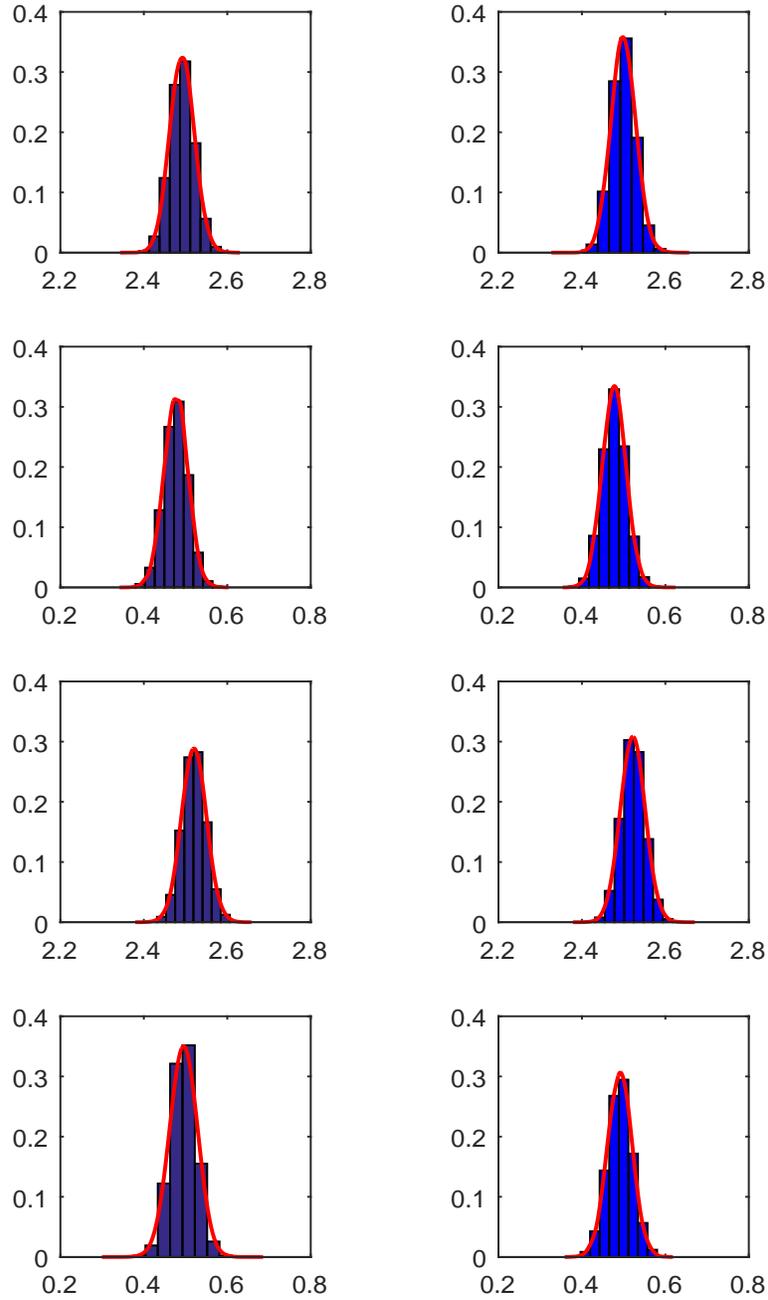}\\
    \caption{The posterior marginal distribution of unknowns at points $u_0(0, 0)$ (first row), $u_0(1, 0)$ (second row), $u_0(1, 1)$ (third row)and $u_0(0, 1)$ (fourth row), and the left column characterises series from the full model while the right column characterises series from the reduced model, $\sigma=0.01$ and $M=8$}
    \label{8-dist}
\end{figure}

Figure \ref{8-dist} plots  the marginal distributions at the $4$ corners using full-order model and reduced order model. In the figure,  the left column refers to samples of the chain constructed from the full-order model and the right column refers to samples of the chain constructed from the reduced-order model. In both runs of the Gibbs sampler, $30000$ samples of $\eta$ are recorded and the last 20000 are used to compute the distributions. We transform them back to $z$ in the plot. It can be seen that the posterior mean estimates have a good agreement using the full-order model and reduced-order model. Moreover, we use Kullback-Leibler divergence to quantify the difference between the approximated joint posterior density and the reference joint posterior density. Though $\gamma$ is part of the inference, $z$ is the main interest for us, we focus on the posterior density of $z$. Denote $L^M$ the approximate likelihood function and
  \[
   L^M(z)=(2\pi\sigma^2)^{-\frac{n_d}{2}}\exp\bigg(-\frac{\|d-H^Mz-I\|_2^2}{2\sigma^2}\bigg).
  \]
  We integrate the joint posterior density with respect to $\gamma$ and the marginal posterior densities of $z$ for full-order model and reduced-order model are given, respectively,  by
  \begin{eqnarray*}
  \pi^d(z) &=& \frac{\int L(z)\pi(z|\gamma)\pi(\gamma)d\gamma}{\int L(z)\pi(z|\gamma)\pi(\gamma)d\gamma dz}\\
  &=& \frac{L(z)S(z)}{\int L(z)S(z)dz},\\
  \pi^{d,M}(z) &=& \frac{\int L^M(z)\pi(z|\gamma)\pi(\gamma)d\gamma}{\int L^M(z)\pi(z|\gamma)\pi(\gamma)d\gamma dz} \\
  &=& \frac{L^M(z)S(z)}{\int L^M(z)S(z)dz},\\
  \end{eqnarray*}
where $S(z)$ is given by
  \[
  S(z)=\big(\frac{z^TWz}{2}+\beta_1\big)^{-(\frac{m}{2}+\alpha_1)}.
  \]
The normalized term in the exact posterior density can be rewritten as
  \begin{eqnarray*}
  \int L(z)S(z)dz &=& \int \frac{L(z)S(z)}{\pi^{d,M}(z)}\pi^{d,M}(z) dz\\
  &=&  \int \frac{L(z)S(z)\int L^M(z)S(z)dz }{L^M(z)S(z)}\pi^{d,M}(z)dz,
  \end{eqnarray*}
 rearrange the equation, we have
  \[
  \frac{\int L(z)S(z)dz}{\int L^M(z)S(z)dz}=\int \frac{L(z)}{L^M(z)}\pi^{d,M}(z)dz.
  \]
Hence the Kullback-Leibler divergence can be rewritten as
   \[
   D_{KL}(\pi^{d,M}||\pi^d)=\mathbb{E}_{\pi^{d,M}}[\log \frac{L^M}{L}]+\log \mathbb{E}_{ \pi^{d,M}}[\frac{L}{L^M}],
   \]
where $z^{(j)}$ are independent samplers from $\pi^{d,M}$. When the GMsFEM is used to solve the forward model on a fixed coarse grid, the approximation
accuracy for the forward model depends on the number of multiscale basis functions we select on each coarse neighborhood \cite{ye11}. Here we discuss the effect of number of multiscale basis functions on the KL divergence $D_{KL}$. In Figure \ref{KL-5.1}, $D_{KL}$ is plotted against  number of multiscale basis functions per coarse node. Here the noise level is $0.01$.
By the figure, we find that the posterior density of Kullback-Leibler divergence $D_{KL}$ decreases as we increase the number of multiscale basis functions per node. This implies that the posterior distribution by surrogate model  approximates the reference posterior distribution better and better as we enrich multiscale basis functions.
\begin{figure}[h]
  \centering
  \includegraphics[width=0.6\textwidth]{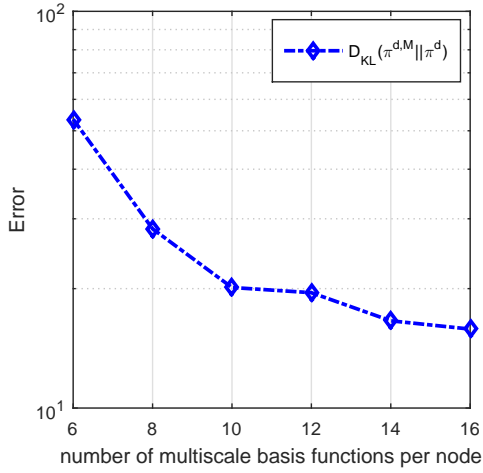}
 \caption{Kullback-Leibler divergence $D_{KL}(\pi^{d,M}||\pi^d)$ between the approximate posterior and the reference posterior}
 \label{KL-5.1}
\end{figure}


\subsection{Recover source location}
\label{rec-loc}

 In this subsection, we focus on the inversion of source location.
 We still consider the equation (\ref{ex1}) with the initial condition  $u_0(x)=0$, and the source term is
  \[
  f(x)= \frac{5}{2\pi\times0.1^2}\exp\bigg(-\frac{(x_1-z_1)^2+(x_2-z_2)^2}{2\times0.1^2}\bigg),
  \]
where $(z_1,z_2)$ denotes the location of the one point source, with strength 5 and width 0.1. We need to identify the source location $(z_1, z_2)$.

The location of the source enters the problem non-linearly, which implies the explicit expression of the posterior distribution is unavailable, large scale PDE problems are required to be solved repeatedly for proposal samplers, which brings up the main computation burden. We use the truncated gPC to approximate the forward model at some observation sensors, and then replace the forward model with the established surrogate model to obtain samplers via MH algorithm. As referred in section \ref{SCM}, when LS-SCM applied, large numbers of deterministic forward models are required to be solved at the off-line stage, we use GMsFEM to solve the corresponding problems to calculate the sample vector $b$ in (\ref{ls}).

The parameter $z=(z_1, z_2)$ is  unknown, and we assume the uniform distribution as its  prior density, i.e., $Z_i\sim U(0,1), i=1,2$. For any given values of $z$, we solve the PDE on a uniform $40\times40$ fine grid
using GMsFEM with time step $\Delta t=0.004$. Observation data are generated by adding independent random noise $N(0, \sigma^2)$ to the solution at a uniform $6\times6$ sensor network. At each sensor location, measurements are taken at time $t=0.04, 0.08$, which corresponds to a total of $72$ measurements. To avoid ``inverse crime", we generate the data by solving the forward model at a much higher resolution than that used in the inversion, i.e., using finite element method at the fine grid and a correspondingly finer time step $\Delta t=0.002$.

The ground truth parameter values is set as $z=(0.25, 0.75)$ in the example. When constructing the gPC approximation, we set the coarse grid size as $N_v=5\times5$ to solve the forward model. Samples $\{z^{(i)}\}_{i=1}^{Q}$ are selected randomly from the prior distribution to construct the marginal matrix $V$ and sample vector $b$, the number of samples is set according to $(\ref{ls_rule})$.

Figure \ref{pos_M} and \ref{pos_N} show the contours of the likelihoods with $\sigma=0.05$, where the solid and dashed lines denote the reference likelihood $L(z)$ and the surrogate likelihood $\tilde L_N^M(z)$, respectively. The accuracy of the surrogate likelihood constructed by  GMsFEM and LS-SMC depends on the number of multiscale basis functions per node and order of the gPC expansion. When the order of the gPC expansion is fixed at $N=10$, Figure \ref{pos_M} shows the difference between the approximate and reference posterior distribution with $6$, $14$ and $22$ multiscale basis functions per node, respectively. When the number of multiscale basis functions is fixed at $M=14$ per node, Figure \ref{pos_N} illustrates the difference between the approximate and reference distribution with different gPC order $4$, $6$, and $8$. The better agreement between $\tilde \pi_N^{d,M}(z)$ and $\pi^d(z)$ is observed with increasing the multiscale basis functions number and gPC order.
\begin{figure}
  \centering
  \includegraphics[width=\textwidth]{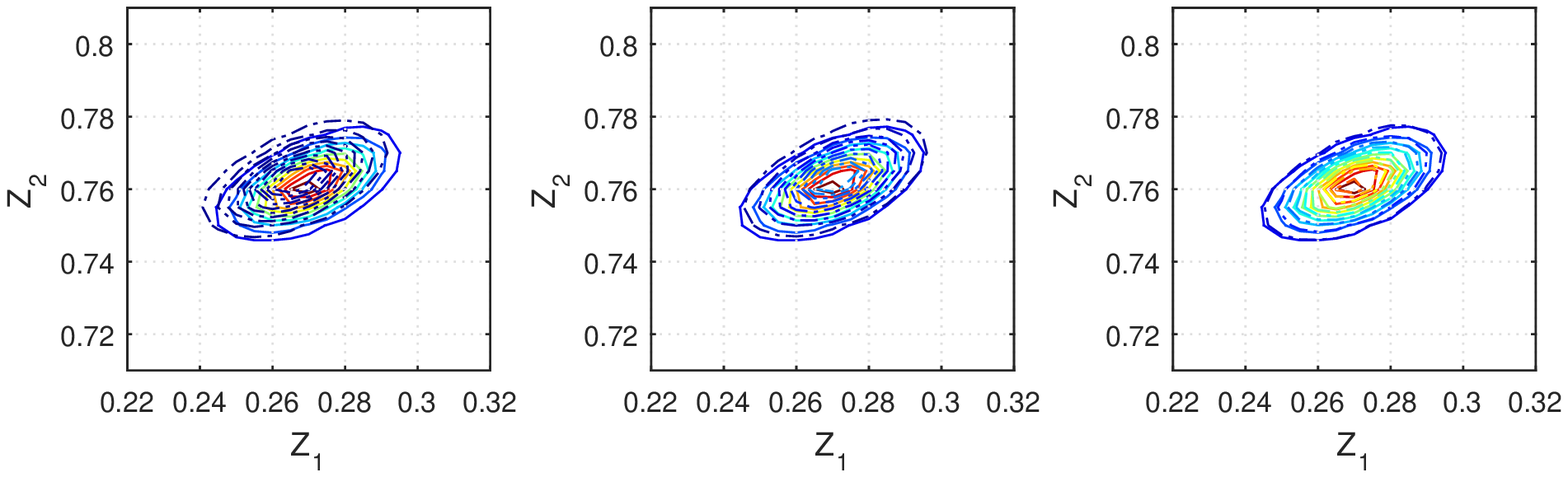}
  \caption{Contours of posterior density of source location. Solid lines are obtained via full forward model; dash lines are obtained via the reduced order model, $N=10$ is fixed, M=6 (left) M=14 (middle),  M=22 (right).}\label{pos_M}
\end{figure}
\begin{figure}
  \centering
  \includegraphics[width=\textwidth]{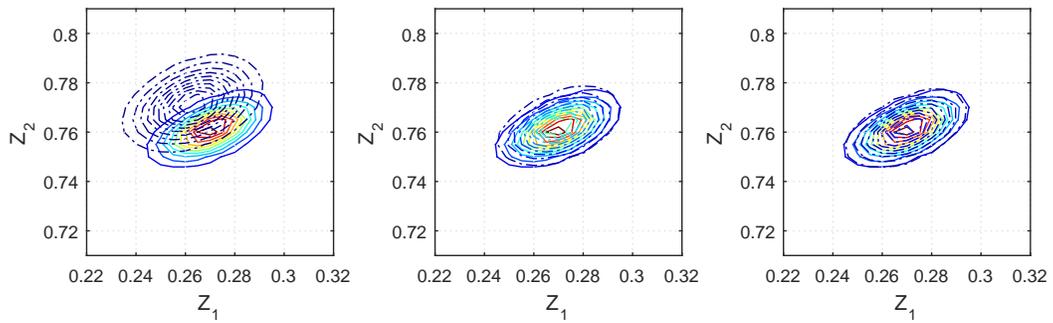}
  \caption{Contours of posterior density of source location. Solid lines are obtained via full forward model; dash lines are obtained via the reduced order model, $M=14$ is fixed, N=4 (left) N=6 (middle), N=8 (right).}\label{pos_N}
\end{figure}

The scale of Random walk method we used in the simulation is $0.005$ and the length of each Markov chain is $30000$, only the last $20000$ realizations are used to compute the relevant statistical quantities. We set $M=14$, $N=8$, the marginal posterior distribution of $z_1$ and $z_2$ are shown in Figure \ref{marge}, where the red solid lines are obtained with $\sigma=0.01$ and blue dashed lines are obtained with $\sigma=0.05$. Thanks to the informed likelihood or measurement data, the posterior support of each parameter is narrower than their priors. As what we expect, the support of the posterior distribution derived by data with noise $0.01$ is narrower than the one derived by data with noise $0.05$.
\begin{figure}
  \centering
  \includegraphics[width=0.7\textwidth]{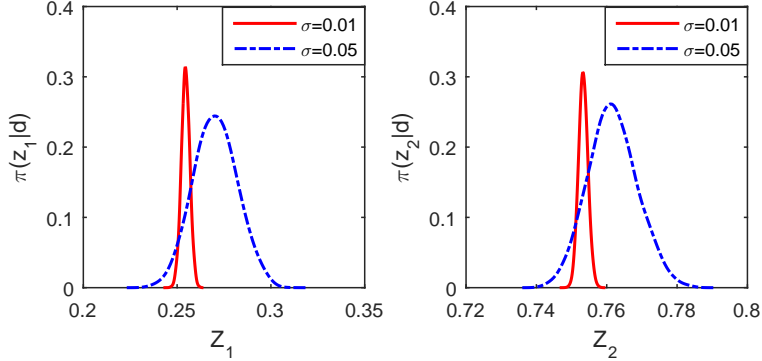}
  \caption{The posterior marginal densities of  $Z_1$ (left) and $Z_2$ (right) with $\sigma=0.01$ and $\sigma=0.05$ noise in the data}\label{marge}
\end{figure}

 The accuracy of the surrogate model depends both on $M$ and $N$. We discuss  the KL divergence $D_{KL}$ between the approximated posterior measure and the reference posterior measure. First we consider the effect of the number of multiscale basis functions per node $M$ on the performance. In Figure \ref{Kld_M}, the approximation of the surrogate model constructed by combining GMsFEM and LS-SCM is plotted against increasing numbers of selected multiscale basis functions when the order of the polynomial is fixed at $N=10$. From this figure,  the model approximation $\sum_{i=1}^{n_d}\|G_i-\tilde G_{N,i}^M\|^2_{L_{\pi_z}^2}$ becomes better as GMsFE basis functions enrich. This is consistent  with the result shown in Theorem \ref{G-GM}. The top curve of the figure shows that the posterior density of KL divergence $D_{KL}$ decreases as we increase the selected multiscale basis functions.  There exists a slight fluctuation  from $M=6$ to $M=8$. This may be caused by the different samples used in constructing the surrogate model at the off-line stage. Convergence of the posterior with respect to gPC order is analyzed in Figure \ref{Kld_N}, where the number of multiscale basis functions per node is fixed at $M=14$. The error $\sum_{i=1}^{n_d}\|G_i-\tilde G_{N,i}^M\|^2_{L_{\pi_z}^2}$ and KL divergence decreases as the gPC order $N$ increases.
\begin{figure}[htbp]
  \centering
  \includegraphics[width=0.6\textwidth]{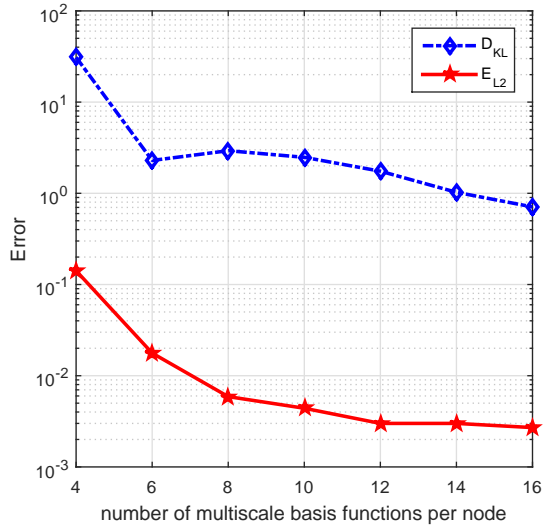}
  \caption{Approximation of the forward model and the posterior density with respect to the number of  multiscale basis functions per node. Dashed line show Kullback-Leibler divergence $D_{KL}(\tilde \pi_N^{d,M}||\pi^d)$ (denoted by $D_{KL}$); solid line show $L_{\pi_z}^2$ error  $\sum_{i=1}^{n_d}\|G_i-\tilde G_{N,i}^M\|^2_{L_{\pi_z}^2}$ (denoted by $E_{L_2}$).}\label{Kld_M}
\end{figure}
\begin{figure}[htbp]
  \centering
  \includegraphics[width=0.6\textwidth]{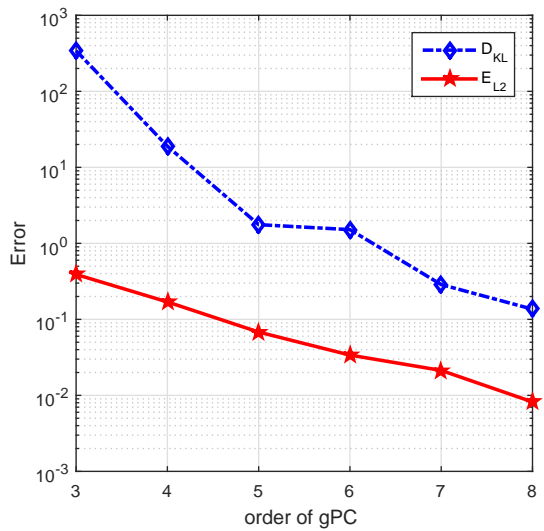}
  \caption{Approximation of the forward model and the posterior density with respect to  the gPC order $N$}\label{Kld_N}
\end{figure}


\subsection{Recover source location and flux simultaneously}
\label{rec-loc-flux}

 In this subsection, we  reconstruct the boundary flux  and identify  the source location simultaneously with some measured data. We consider the model with the following boundary condition and source term
  \begin{equation}
  \label{ex3}
    \left\{
    \begin{aligned}
    \frac{\partial u}{\partial t}&=\text{div}\bigg(k(x)\nabla u(x,t)\bigg)+f(x), \quad x\in\Omega,t\in(0,T]\\
      k\frac{\partial u}{\partial n} &=p(x_2,t),\ \text{on}\ \Gamma_{N_1},t\in(0,T]\\
      k\frac{\partial u}{\partial n} &=0,\ \text{on}\ \Gamma_{N_2},t\in(0,T]\\
      u(x,t)&=0,\ \text{on}\ \Gamma_D,t\in(0,T]\\
      u(x,0)&=0,\ \ x\in\Omega,
    \end{aligned}
    \right.
  \end{equation}
where the boundaries are $\Gamma_{N_1}=\{(0,x_2)\in\Omega\}$, $\Gamma_{N_2}=\{(x_1,0)\in\Omega\}$, and $\Gamma_D=\{(1,x_2)\in\Omega,(x_1,1)\in\Omega\}$.

We assume the true flux and source term are
  \begin{eqnarray*}
    p(x_2,t)&=&\sin(\pi x_2)\sin(10\pi t)+5, \\
    f(x) &=&  \frac{8}{2\pi \times0.1^2}\exp\bigg(-\frac{(x_1-z_1)^2+(x_2-z_2)^2}{2\times0.1^2}\bigg),
  \end{eqnarray*}
where $z=(z_1,z_2)\in\mathbb{R}^2$ is the unknown source location. We want to estimate the flux and source location.   Note that the flux depends on time $t$ and enters the system linearly, the location of source term enters the system nonlinearly.

The unknown flux function $p(x_2,t)$ can be  discretized  in space and time.  Let $(x_2,t):=s$. The unknown flux can be treated as a random field $p(s,\omega)$. We denote $\eta\in\mathbb{R}^{n_0}$ as the KLE coefficient vector of the discretized flux function. The uncertainty of the system comes from $\eta$ and $z$. In the Bayesian setting, both $\eta$ and $z$ are random variables. It is natural to suppose that they are independent of each other. If a truncated gPC expansion is applied to approximate the system $(\ref{ex3})$ directly, the high dimension of the unknowns will lead great challenge for  solving the inverse problem. We consider another expansion of the forward model that separates $\eta$ and $z$.  Due to the linearity of the parabolic PDE, the forward model has the decomposition,
  \[
  G(\eta,z)=f_1(\eta)+f_2(z),
  \]
where $f_1(\eta)$ denotes the solution of the system $(\ref{ex3})$ with zero source term, and $f_2(z)$  the solution of the system $(\ref{ex3})$ with $p(x_2,t)=0$.  Our goal is to find the appropriate estimation of $\eta$ and $z$ given the measured data $d$. Due to the ill-posedness of the problem, some prior information is needed. Following the examples in Subsection \ref{rec-init} and Subsection \ref{rec-loc}, we use MRF as the prior for the flux parameter, and uniform distribution for the prior of the location. Then the posterior density can be derived as
  \[
  \pi(\eta,z|d)\propto \gamma^{\frac{n_0}{2}}\exp(-\frac{\gamma\eta^T\mathbf{W}\eta}{2})\exp\{-\frac{1}{2\sigma^2}\|d-f_2(z)-f_1(\eta)\|^2\},
  \]
where $\gamma$ is a hyperparameter.  We use  the usual conjugate gamma prior $\Gamma(\alpha_2, \beta_2)$.  By using  the symbol defined  in Subsection \ref{rec-init} $\mathbf{W}$, the final conditional posterior density has the form
  \begin{equation}
  \label{flux_eta}
  \pi(\eta|d,z,\gamma) \propto \exp\{-\frac{1}{2\sigma^2}\|d-f_2(\chi)-f_1(\eta)\|^2\}\exp\{-\frac{\gamma}{2}\eta^T\mathbf{W}\eta\},
  \end{equation}
  \begin{equation}
  \label{source}
  \pi(z|d,\eta,\gamma) \propto \exp\{-\frac{1}{2\sigma^2}\|d-f_2(z)-f_1(\eta)\|^2\},
  \end{equation}
  \begin{equation}
  \label{lamda}
  \pi(\gamma|d,\eta,z) \sim \Gamma(\alpha_2+\frac{n_0}{2}, \beta_2+\frac{1}{2}\eta^T\mathbf{W}\eta).
  \end{equation}
As we have noticed that the flux enters the model linearly, a sensitivity matrix $H_2$ can be obtained similarly  as in Subsection \ref{rec-init}, i.e.,
\[
f_1(\eta)=\mathbf{H}_2\eta, \quad \mathbf{H}_2:=H_2\mathbf{B}.
 \]
 In order to estimate the source location efficiently, we use GMsFEM and LS-SCM to construct a reduced order model for  $(\ref{ex3})$ with zero boundary condition  and zero initial condition. Then the reduced order model for  $(\ref{ex3})$ can be expressed by
  \[
  \tilde G_N^M(\eta, z)=\mathbf{H}^M_2\eta+\tilde G_N^M(z),
  \]
where $\mathbf{H}^M_2:=H^M_2\mathbf{B}$ and $H^M_2$ is the sensitivity matrix computed  by GMsFEM,  $\tilde G_N^M(z)$ is the surrogate model of system only dependent on $z$, which is constructed by combing GMsFEM with LS-SCM. Inspired by the numerical experiments in Subsection \ref{rec-init} and Subsection \ref{rec-loc}, we propose to use Gibbs  method to sample the flux, and random walk method to the source location $z$.  The outline of the computation  for the example is described in  Table \ref{tab3}.

\begin{table}[htbp]
  \centering
  \caption{The outline of the computation for the example in Subsection \ref{rec-loc-flux}}\label{tab3}
  \begin{tabular}{l}
     \hline
  \bf{Off-line phase/Construction of the reduced order model}:\rule{0pt}{0.8cm}\\
  $\cdot$ Calculate the GMsFEM matrix $R$;\rule{0pt}{0.5cm}\\
  $\cdot$ Use GMsFEM to obtain the sensitivity matrix $H^M_2$;\rule{0pt}{0.5cm}\\
  $\cdot$ Use finite element basis functions to discretize the eigenvalue problem involved in KLE and \\
    \quad obtain matrix $\mathbf{B}$;\rule{0pt}{0.5cm}\\
  $\cdot$ Combine GMsFEM with LS-SCM to obtain the approximation $\tilde G_N^M(z)$;\rule{0pt}{0.5cm}\\
  \bf{MCMC sampling}:\rule{0pt}{0.8cm}\\
   1. Initialise $\eta^{(0)}$, $z^{(0)}$ and $\gamma^{(0)}$;\rule{0pt}{0.5cm}\\
   2. For j=1:Num\rule{0pt}{0.5cm}\\
   Update $\eta^{(j)}$ according to $(\ref{flux_eta})$ $^*$ and update each component of it by Gibbs method as \rule{0pt}{0.5cm}\\
   discussed in Subsection \ref{rec-init};\rule{0pt}{0.5cm}\\
   Update $z^{(j)}$ basing on $(\ref{source})$ $^*$ by random walk MH algorithm;\rule{0pt}{0.5cm}\\
   Update $\gamma^{(j)}$ basing on $(\ref{lamda})$ by sampling from Gamma distribution directly.\rule{0pt}{0.5cm}\\
   end for\rule{0pt}{0.5cm}\\
     \hline
  $*$ means we change the equation by replacing $f_1(\eta)$ with $\mathbf{H}^M_2\eta$ and $f_2(z)$ with $\tilde G_N^M(z)$
   \end{tabular}
\end{table}

Measurement data are taken at at a uniform $9\times 9$ sensor network in space and time levels $[0.01:0.01:0.1]$. The forward model is solved at a uniform $80\times 80$ fine grid. With time step $\Delta t=0.001$, we generate  measurement data.  We use $\Delta t=0.002$ for solving the forward problem to avoid inverse crime. Measurement noise   $\sigma$ is set to be 0.005. We use a discretization of 20 grids in space and 11 basis functions in time to reconstruct the flux function, i.e., there are $n=220$ unknowns from the flux function. When the energy ratio is set as $e(n_0)=0.9$, we  $n_0=74$ random variables in truncated KLE to characterize the flux random field.
 This reduces the dimension of the parameters in flux and can speed up the MCMC sampling.
 We take $8\times8$ coarse grid and select  $10$ multiscale basis functions  each coarse neighborhood.  In the simulation, the total order of the gPC with respect to $z$ is $N=10$.

The ground truth parameter values  $z= (0.5,0.5)$ in this example.  We  run a chain of length $40000$ and take the last $50\%$  samples to compute the statistical quantities. The numerical results are shown in Figure \ref{flux} and \ref{po}. The relative $L_2$ error is about $5.68\%$  for the flux reconstruction. By Figure \ref{flux}, we find that  estimates at the initial time and the final time level are slightly poor. This is because the noise to signal ratio in the first few time steps is large, and the simulated data contains less information of the flux in the the last time levels. Figure \ref{po} shows the histograms, univariate and bivariate marginal posterior distributions of $z_1$ and $z_2$ , it can be seen from the estimated posterior distribution that the support of the posterior density is also narrower than the prior's.


By using  the reduced order model, we have efficiently  recovered the boundary  flux and source location simultaneously.
 In a similar way, we can also use the method to recover the initial function, boundary condition  and the source location at the same time.
\begin{figure}[htbp]
 \centering
 \includegraphics[width=0.5\textwidth]{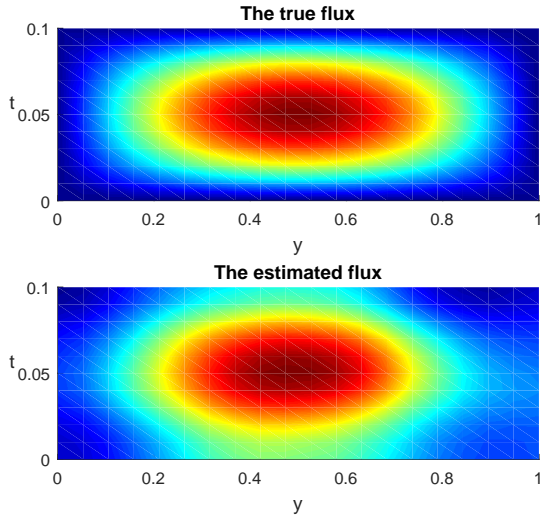}
 \caption{The true flux value vs the estimated flux, and the relative $L_2$ error is
5.68\%}\label{flux_source_thita}
\label{flux}
\end{figure}

\begin{figure}[htbp]
 \centering
 \includegraphics[width=0.6\textwidth]{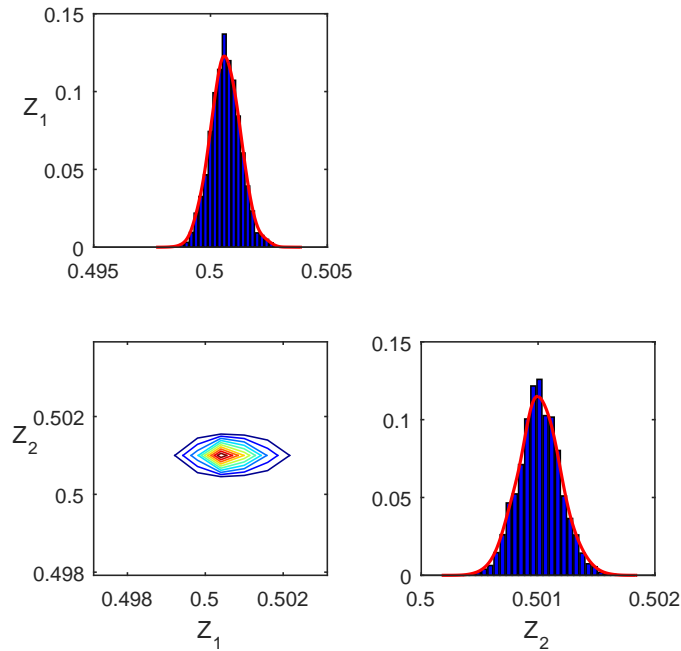}
 \caption{Histograms, univariate marginal and bivariate posterior distributions of $z_1$ and $z_2$, $M=10, N=10$.}
 \label{po}
\end{figure}

\section{Conclusion}

 The  paper has presented a multiscale model reduction method with application in Bayesian inverse problem for subsurface flow.   The reduced order model has accelerated  the MCMC sampling  under the framework of Bayesian inference. GMsFEM is used to construct sensitivity matrices for linear inverse problems.  For nonlinear inverse problems,  we have established the reduced order model by combing GMsFEM with LS-SCM.
  The  forward model is solved by GMsFEM over the support of the prior at the off-line stage. The presented strategy  leads to an accurate  approximation of the full-order forward model and gives a surrogate posterior density,
   which is easier to be evaluated than the original posterior.  Mathematical analysis is carried out for the approximation of reduced order method in the Bayesian inverse problem.
    We have numerically discussed the inverse problems in a confined subsurface flow model.
     The numerical examples confirms that the approximated posterior approximates the reference posterior very well by  using the multiscale model reduction method.

We have used  KLE to represent the unknown field in the inverse problems. The dimension reduction technique requires some degree of correlation or structure in the prior, one would ultimately like to find a basis emphasizing features of the unknown field that are most affected by the data \cite{tc14}. When constructing surrogate model using LS-SMC over support of the prior density, we find the support of the resultant posterior density is much narrower than the prior's.  Advanced  MCMC methods such as sequential Monte Carlo, two stage MC or multilevel MCMC can be used to generate some intermediate density, in which the data information can be incorporated \cite{jw11}. Then we can construct the reduced order  model over the support of the intermediate density. We may create a model order reduction method that incorporates the data information. For example, when constructing the coarse subspace, we use the data information for the mode direction, and hence the dimension of the subspace would decrease as the addition of the data information.  Further investigation of these issues is worth pursuing in the future.

\section*{Acknowledgments}
We acknowledge the support of Chinese NSF 11471107.

\end{document}